\begin{document}

\parskip0pt
\parindent10pt

\newenvironment{answer}{\color{Blue}}{\color{Black}}
\newenvironment{exercise}
{\color{Blue}\begin{exr}}{\end{exr}\color{Black}}

\theoremstyle{plain} %italicizes text
\newtheorem{theorem}{Theorem}[section]
\newtheorem*{theorem*}{Theorem}
\newtheorem{prop}[theorem]{Proposition}
\newtheorem{porism}[theorem]{Porism}
\newtheorem{lemma}[theorem]{Lemma}
\newtheorem{cor}[theorem]{Corollary}
\newtheorem{conj}[theorem]{Conjecture}
\newtheorem{funfact}[theorem]{Fun Fact}
\newtheorem*{claim}{Claim}
\newtheorem{question}{Question}
\newtheorem*{conv}{Convention}

\theoremstyle{remark}
\newtheorem{exr}{Exercise}
\newtheorem*{rmk}{Remark}

\theoremstyle{definition}
\newtheorem{defn}{Definition}
\newtheorem{example}{Example}

\renewcommand{\mod}[1]{{\ifmmode\text{\rm\ (mod~$#1$)}\else\discretionary{}{}{\hbox{ }}\rm(mod~$#1$)\fi}}

\newcommand{\ns}{\mathrel{\unlhd}}
\newcommand{\tr}{\text{tr}}
\newcommand{\wt}[1]{\widetilde{#1}}
\newcommand{\wh}[1]{\widehat{#1}}
\newcommand{\cbrt}[1]{\sqrt[3]{#1}}
\newcommand{\floor}[1]{\left\lfloor#1\right\rfloor}
\newcommand{\abs}[1]{\left|#1\right|}
\newcommand{\ds}{\displaystyle}
\newcommand{\nn}{\nonumber}
\newcommand{\re}{\text{Re}}
\renewcommand{\ker}{\textup{ker }}
\renewcommand{\char}{\textup{char }}
\renewcommand{\Im}{\textup{Im }}
\renewcommand{\Re}{\textup{Re }}
\newcommand{\area}{\textup{area }}
\newcommand{\isom}
    {\ds \mathop{\longrightarrow}^{\sim}}
\renewcommand{\ni}{\noindent}
\renewcommand{\bar}{\overline}
\newcommand{\morph}[1]
    {\ds \mathop{\longrightarrow}^{#1}}

\newcommand{\Gal}{\textup{Gal}}
\newcommand{\Aut}{\textup{Aut}}
\newcommand{\Crypt}{\textup{Crypt}}
\newcommand{\disc}{\textup{disc}}
\newcommand{\sgn}{\textup{sgn}}
\newcommand{\del}{\partial}

\newcommand{\mattwo}[4]{
\begin{pmatrix} #1 & #2 \\ #3 & #4 \end{pmatrix}
}

\newcommand{\vtwo}[2]{
\begin{pmatrix} #1 \\ #2 \end{pmatrix}
}
\newcommand{\vthree}[3]{
\begin{pmatrix} #1 \\ #2 \\ #3 \end{pmatrix}
}
\newcommand{\vcol}[3]{
\begin{pmatrix} #1 \\ #2 \\ \vdots \\ #3 \end{pmatrix}
}

\newcommand*\wb[3]{%
  {\fontsize{#1}{#2}\usefont{U}{webo}{xl}{n}#3}}

\newcommand\myasterismi{%
  \par\bigskip\noindent\hfill
  \wb{10}{12}{I}\hfill\null\par\bigskip
}
\newcommand\myasterismii{%
  \par\bigskip\noindent\hfill
  \wb{15}{18}{UV}\hfill\null\par\medskip
}
\newcommand\myasterismiii{%
  \par\bigskip\noindent\hfill
  \wb{15}{18}{z}\hfill\null\par\bigskip
}

\newcommand{\one}{{\rm 1\hspace*{-0.4ex} \rule{0.1ex}{1.52ex}\hspace*{0.2ex}}}

\renewcommand{\v}{\vec{v}}
\newcommand{\w}{\vec{w}}
\newcommand{\e}{\vec{e}}
\newcommand{\m}{\vec{m}}
\renewcommand{\u}{\vec{u}}
\newcommand{\vecx}{\vec{e}_1}
\newcommand{\vecy}{\vec{e}_2}
\newcommand{\vo}{\vec{v}_1}
\newcommand{\vt}{\vec{v}_2}

\renewcommand{\o}{\omega}
\renewcommand{\a}{\alpha}
\renewcommand{\b}{\beta}
\newcommand{\g}{\gamma}
\newcommand{\sig}{\sigma}
\renewcommand{\d}{\delta}
\renewcommand{\t}{\theta}
\renewcommand{\k}{\kappa}
\newcommand{\ve}{\varepsilon}
\newcommand{\op}{\text{op}}

\newcommand{\Z}{\mathbb Z}
\newcommand{\ZN}{\Z_N}
\newcommand{\Q}{\mathbb Q}
\newcommand{\N}{\mathbb N}
\newcommand{\R}{\mathbb R}
\newcommand{\C}{\mathbb C}
\newcommand{\F}{\mathbb F}
\newcommand{\T}{\mathbb T}
\renewcommand{\H}{\mathbb H}
\newcommand{\B}{\mathcal B}
\newcommand{\p}{\mathcal P}
\renewcommand{\P}{\mathbb P}
\renewcommand{\r}{\mathcal R}
\renewcommand{\c}{\mathcal C}
\newcommand{\h}{\mathcal H}
\newcommand{\f}{\mathcal F}
\newcommand{\s}{\mathcal S}
\renewcommand{\L}{\mathcal L}
\newcommand{\lam}{\lambda}
\newcommand{\E}{\mathcal E}
\newcommand{\Ex}{\mathbb E}
\newcommand{\D}{\mathbb D}
\newcommand{\oh}{\mathcal O}
\newcommand{\n}{\mathcal N}
\newcommand{\I}{\mathcal I}
\newcommand{\G}{\mathcal G}
\newcommand{\J}{\mathcal J}

\newcommand{\diam}{\text{ diam}}
\newcommand{\vol}{\text{vol}}
\newcommand{\Int}{\text{Int}}

\newcommand{\Span}{\text{span}}
\newcommand{\AP}{\text{AP}}

\newcommand{\MoM}{\text{MoM}}

\newcommand{\0}{{\vec 0}}

\newcommand{\ignore}[1]{}

\newcommand{\poly}[1]{\textup{Poly}_{#1}}

\newcommand*\circled[1]{\tikz[baseline=(char.base)]{
            \node[shape=circle,draw,inner sep=2pt] (char) {#1};}}

\newcommand*\squared[1]{\tikz[baseline=(char.base)]{
            \node[shape=rectangle,draw,inner sep=2pt] (char) {#1};}}

\title{Lower bounds for shifted moments of the Riemann zeta function} 

\author{Michael J. Curran }
\email{Michael.Curran@maths.ox.ac.uk}
\address{Mathematical Institute, University of Oxford, Oxford, OX2 6GG, United Kingdom.}

\maketitle

\begin{abstract}
In previous work \cite{CurranCorrelations}, the author gave upper bounds for the shifted moments of the zeta function
\[
M_{\bm{\a},\bm{\b}}(T) = \int_T^{2T}  \prod_{k = 1}^m |\zeta(\tfrac{1}{2} + i (t + \a_k))|^{2 \b_k} dt
\]
introduced by Chandee \cite{Chandee}, where $\bm{\a} = \bm{\a}(T) =  (\a_1, \ldots, \a_m)$ and $\bm{\b} = (\b_1 \ldots , \b_m)$ satisfy $|\a_k| \leq T/2$ and $\b_k\geq 0$.
Assuming the Riemann hypothesis, we shall prove the corresponding lower bounds:
\[
M_{\bm{\a},\bm{\b}}(T)  \gg_{\bm{\b}} T (\log T)^{\b_1^2 + \cdots + \b_m^2} \prod_{1\leq j < k \leq m} |\zeta(1 + i(\a_j - \a_k) + 1/ \log T )|^{2\b_j \b_k}.
\]

\end{abstract}

\section{Introduction}\label{sec:Intro}
This paper is concerned with the shifted moments 
\begin{equation}\label{eqn:shiftedMoments}
M_{\bm{\a},\bm{\b}}(T) = \int_T^{2T}  \prod_{k = 1}^m |\zeta(\tfrac{1}{2} + i (t + \a_k))|^{2 \b_k} dt, 
\end{equation}
where $\bm{\a} = \bm{\a}(T) =  (\a_1, \ldots, \a_m)$ and $\bm{\b} = (\b_1 \ldots , \b_m)$ satisfy $|\a_k| \leq T/2$ and $\b_k\geq 0$.
These were first studied in general by Chandee \cite{Chandee}. In \cite{CurranCorrelations}, the author proved the upper bound 
\[
M_{\bm{\a},\bm{\b}}(T)  \ll_{\bm{\b}} T (\log T)^{\b_1^2 + \cdots + \b_m^2} \prod_{1\leq j < k \leq m} |\zeta(1 + i(\a_j - \a_k) + 1/ \log T )|^{2\b_j \b_k}
\]
assuming the Riemann hypothesis.
The goal of this paper is to obtain the corresponding lower bound, showing that these bounds are of the correct order or magnitude.

\begin{theorem}\label{thm:main}
Assume the Riemann hypothesis. If $\b_k \geq 0$ and $|\a_k| \leq T/2$ for $k = 1, \ldots , m,$ then
\[
M_{\bm{\a},\bm{\b}}(T) \gg_{\bm{\b}} T (\log T)^{\b_1^2 + \cdots + \b_m^2} \prod_{1\leq j < k \leq m} |\zeta(1 + i(\a_j - \a_k) + 1/ \log T )|^{2\b_j \b_k}
\]
for $T$ sufficiently large in terms of $\bm{\b}$.
\end{theorem}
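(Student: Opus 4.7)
The plan is to adapt the Radziwi\l\l--Soundararajan lower bound method, as developed by Heap--Soundararajan and subsequent authors, to the shifted setting. The central object is a Dirichlet polynomial $\mathcal{N}(t)$ engineered to approximate $\prod_{k=1}^m \zeta(\tfrac{1}{2}+i(t+\alpha_k))^{\beta_k}$ on most of $[T,2T]$; given such an $\mathcal{N}$, a Cauchy--Schwarz comparison between $M_{\bm{\a},\bm{\b}}(T)$ and the mean-square of $\mathcal{N}$ produces the desired lower bound.

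Concretely, take $X = T^\theta$ for a small $\theta > 0$ and partition the primes $p \leq X$ into intervals $I_1, \ldots, I_J$ with cutoffs $X_j$ growing super-geometrically. For each $1 \leq k \leq m$ and each $j$, set
\[
\mathcal{N}_k^{(j)}(t) = \sum_{\ell \leq L_j} \frac{1}{\ell!} \Bigl( \beta_k \sum_{p \in I_j} p^{-1/2 - i(t+\alpha_k)} \Bigr)^\ell,
\]
a truncation of $\exp\bigl(\beta_k \sum_{p \in I_j} p^{-1/2-i(t+\alpha_k)}\bigr)$, and put $\mathcal{N}_k(t) = \prod_j \mathcal{N}_k^{(j)}(t)$, $\mathcal{N}(t) = \prod_k \mathcal{N}_k(t)$. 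With $L_j$ chosen so that $\mathcal{N}(t)$ has length at most $T^{1/10}$, the orthogonality of Dirichlet polynomials on $[T,2T]$ combined with Mertens-type estimates yields
\[
\int_T^{2T} |\mathcal{N}(t)|^2 \, dt \;\asymp_{\bm{\b}}\; T (\log T)^{\sum_k \beta_k^2} \prod_{j<k} |\zeta(1 + i(\alpha_j - \alpha_k) + 1/\log T)|^{2\beta_j\beta_k}.
\]
The correlation factor appears because, within each prime interval $I_r$, the cross terms between $\mathcal{N}_j^{(r)}$ and $\overline{\mathcal{N}_k^{(r)}}$ contribute $\sum_{p \in I_r} p^{i(\alpha_j - \alpha_k)}/p$, whose exponentiated sum over all $r$ builds up $\prod_{j<k} |\zeta(1+i(\alpha_j-\alpha_k)+1/\log T)|^{2\beta_j\beta_k}$.

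For the core step, let $f(t) := \prod_k \zeta(\tfrac{1}{2}+i(t+\alpha_k))^{\beta_k}$ be defined via the principal branch of $\log\zeta$ on the set $\mathcal{G} \subset [T,2T]$ of $t$ bounded away from ordinates of zeros of $\zeta$. Cauchy--Schwarz restricted to $\mathcal{G}$ gives
\[
M_{\bm{\a},\bm{\b}}(T) \;\geq\; \int_{\mathcal{G}} |f(t)|^2 \, dt \;\geq\; \frac{\bigl| \int_{\mathcal{G}} f(t) \overline{\mathcal{N}(t)} \, dt \bigr|^2}{\int_T^{2T} |\mathcal{N}(t)|^2 \, dt}.
\]
It remains to show that $\int_{\mathcal{G}} f(t) \overline{\mathcal{N}(t)} \, dt \gg \int_T^{2T} |\mathcal{N}(t)|^2 \, dt$. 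Under RH, Soundararajan's estimate expresses $\log\zeta(\tfrac{1}{2}+i(t+\alpha_k))$ as the short prime polynomial $\sum_{p \leq X} p^{-1/2-i(t+\alpha_k)}$ plus a small error outside an exceptional set, so on the bulk of $\mathcal{G}$ one has $f(t) \approx \mathcal{N}(t)$, and hence the twisted integral is comparable to the mean-square above.

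The main technical obstacle is the \emph{bad-set analysis}: controlling contributions from $t$ where some $\mathcal{N}_k^{(j)}(t)$ is a poor proxy for the corresponding partial sum of $\log\zeta(\tfrac{1}{2}+i(t+\alpha_k))$, or where some $|\zeta(\tfrac{1}{2}+i(t+\alpha_k))|$ is anomalously small. These are made negligible through high-moment bounds for the prime polynomials, exploiting the rapid growth of $X_j$ to yield near-independence across intervals. The shifted setting compounds the difficulty: fluctuations at $m$ different points must be controlled simultaneously, and the separations $\alpha_j-\alpha_k$ can range from very small (where $|\zeta(1+i(\alpha_j-\alpha_k)+1/\log T)|$ may be as large as $\log T$) to very large (where near-independence is recovered). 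Producing this factor with exactly the right exponents $2\beta_j\beta_k$, uniformly over $\bm{\a}$, requires careful calibration of the prime partition and the truncation levels $L_j$.
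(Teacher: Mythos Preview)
Your overall architecture---build an Euler-product Dirichlet polynomial $\mathcal{N}$ modeling $\prod_k\zeta^{\beta_k}$ and compare via an inequality---matches the paper's in spirit, but there is a genuine gap at the core step. You write that under RH ``Soundararajan's estimate expresses $\log\zeta(\tfrac12+i(t+\alpha_k))$ as the short prime polynomial \ldots\ plus a small error outside an exceptional set,'' and conclude $f(t)\approx\mathcal{N}(t)$ on most of $[T,2T]$. But Soundararajan's inequality (Lemma~\ref{lem:logZetaUpperBound} here) is one-sided: it gives $\log|\zeta|\le \Re\sum_p(\cdots)+O(\log T/\log X)$, not a two-sided pointwise approximation. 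No companion lower bound of comparable quality is available, because $\log|\zeta(\tfrac12+it)|\to-\infty$ at every zero and the zeros have density $\asymp\log T$; there is no set of nearly full measure on which $\prod_k\zeta(\tfrac12+i(t+\alpha_k))^{\beta_k}$ is uniformly close to $\mathcal{N}(t)$. Your lower bound for $\int_{\mathcal G}f\,\overline{\mathcal N}\,dt$ is therefore unproved, and the bad-set remarks in your final paragraph do not repair this: high-moment bounds for the prime polynomials control where $\mathcal{N}$ misbehaves, not where $|\zeta|$ is anomalously small.

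The paper's route around this obstacle is precisely the content of the Heap--Soundararajan/Radziwi\l\l--Soundararajan mechanism, and it is \emph{not} Cauchy--Schwarz against $f=\prod_k\zeta^{\beta_k}$. Instead one forms
\[
\mathcal{I}_0=\int_{\mathcal{G}_m}\prod_{k\le m}\zeta(\tfrac12+i(t+\alpha_k))\cdot\bigl(\text{polynomial modeling }\zeta^{\beta_k-1}\overline{\zeta}^{\,\beta_k}\bigr)\,w(t/T)\,dt,
\]
with each $\zeta$ appearing to the \emph{first} power, so that $\mathcal{I}_0$ is a twisted pure shifted $m$-th moment computable unconditionally via an approximate functional equation (Lemma~\ref{lem:AFEpure}). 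H\"older's inequality with exponents $\tfrac1p=\tfrac1{4\beta_\ast}$, $\tfrac1{r_k}=\tfrac1{2m}-\tfrac{\beta_k}{4m\beta_\ast}$ then yields $|\mathcal{I}_0|\le M_{\bm{\alpha},\bm{\beta}}(T)^{1/p}\,|\mathcal{J}|^{1/q}\prod_k|\mathcal{I}_k|^{1/r_k}$, where $\mathcal{J}$ contains no $\zeta$ at all and each $\mathcal{I}_k$ contains $|\zeta(\tfrac12+i(t+\alpha_k))|^{2m}$---again an integer power, now bounded from \emph{above} via Soundararajan's inequality in its legitimate direction. The lower bound for $M_{\bm{\alpha},\bm{\beta}}$ follows once $\mathcal{I}_0$ is shown to be large and $\mathcal{J},\mathcal{I}_k$ not too large. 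Your outline omits this passage through integer powers of $\zeta$, which is the device that makes every integral in the scheme actually computable.
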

\noindent
Note that the order of magnitude in this bound is consistent with the formula for the shifted moments predicted by the famous work of Conrey Farmer Keating Rubenstein and Snaith \cite{CFKRS}.
The reader may consult \cite{CurranCorrelations} for more background on the shifted moments and a discussion of the previous estimates for $M_{\bm{\a},\bm{\b}}(T)$.

The method of proof is similar to a principle pioneered in the works of Heap and Soundararajan \cite{HS} and Radziwiłł and Soundararajan \cite{RS-LB}.
These works demonstrate that if one can asymptotically evaluate the twisted $2k^{\text{th}}$ moment of a family of $L$-functions for some integer $k$, then one can obtain sharp lower bounds for the $2\b^\text{th}$ moments for all $\b \geq 0$.
We will need a more elaborate argument in our setting since there are more parameters for the shifted moments.
To explain the method, we will first introduce some notation.
Set
\[
\b_\ast := \sum_{k\leq m} \max(1, \b_k).
\]
We will choose a sequence of parameters $T_j = T^{c_j}$, where
\[
c_0 = 0 \text{ and } c_j = \frac{e^{j}}{(\log_2 T)^2}
\]
for $j > 0$, where throughout this paper we will use $\log_j$ to denote the $j$-fold iterated logarithm.
Let $L$ be the largest integer such that $T_L \leq T^{\d}$ where $0 < \d < e^{-1000\b_\ast}$ is some small constant depending on $\bm{\b}$ to be chosen later.
For any $X$ let
\[
\p_{1,X}(s) = \sum_{p \leq T_1} \frac{1}{p^{s+1/\log X}} \frac{\log X/p}{\log X} + \sum_{p\leq \log T} \frac{1}{2p^{2s}},
\]
and given any $2\leq j \leq L$ define
\[
\p_{j,X}(s) = \sum_{p\in (T_{j-1},T_j]} \frac{1}{p^{s+1/\log X}} \frac{\log X/p}{\log X}. 
\]
\begin{rmk}
In our proof, we could have also used the simpler Dirichlet polynomial
\[
\sum_{p\in (T_{j-1},T_j]} \frac{1}{p^s}
\]
in place of $\p_{j,X}(s)$ for all $1\leq j \leq L$. 
However the $\p_{j,X}$ will appear when using the Riemann hypothesis, and using the $\p_{j,X}$ from the onset will reduce the total number of mean value calculations.
\end{rmk}
If $\p_{j,X}(s)$ is not too large, then (see lemma \ref{lem:expTaylorSeries}) we will be able to efficiently approximate $\exp(\b \p_{j,X} (s))$ with the following Dirichlet polynomial
\[\label{eq:TaylorExp}
\n_{j,X}(s;\b) := \sum_{m \leq 100\b_\ast^2 K_j} \frac{\b^m \p_{j,X}(s)^m}{m!},
\]
where $K_j = c_j^{-3/4}$ for $j \geq 1$.
The set
\begin{equation*}
\G := \left\{t\in[T/2,5T/2]: |P_{j,T_L} (\tfrac{1}{2} + i t)| \leq K_j \text{ for all } 1\leq j\leq L\right\}.
\end{equation*}
consists of the $t\in [T/2,5T/2]$ for which $\n_{j,T_L}(s;\b)$ is a good approximation to $\exp(\b \p_{j,T_L} (s))$.
We will be computing moments over the set
\[
\G_m :=  \left\{t\in [T,2T]: t + \a_k \in \G \text{ for all } 1\leq k \leq m \right\}.
\]

Fix a smooth function $w$ such that $1_{[5/4,7/4]}(t) \leq w(t) \leq 1_{[1,2]}$, and  suppose for a moment that we could compute the integral
\begin{align*}
\I_0 &= \int_{\G_m} \prod_{k\leq m} \zeta(\tfrac{1}{2} + i(t+\a_k))\\
&\times \prod_{j\leq L} \exp\left((\b_k-1) \p_{j,T_L}(\tfrac{1}{2}+i(t+\a_k)) + \b_k  \p_{j,T_L}(\tfrac{1}{2}-i(t+\a_k)) \right) w(t/T) \ dt.
\end{align*}
Then Hölder's inequality implies
\[
|\I_0| \leq M_{\bm{\a}, \bm{\b}}(T)^{1/p} \times |\J|^{1/q} \times \prod_{k \leq m} |\I_k|^{1/r_k},
\]
where
\[
\frac{1}{p} = \frac{1}{4\b_\ast}, \quad \frac{1}{r_k} = \frac{1}{2m} - \frac{\b_k}{4m\b_\ast}, \quad \frac{1}{q} = 1 - \frac{1}{p} - \sum_{k\leq m} \frac{1}{r_k}
\]
are conjugate exponents,
\begin{align*}
\I_k = \int_{\G_m} |\zeta(\tfrac{1}{2} + i (t + \a_k))|^{2m} \prod_{j\leq L} &\exp\left(2(\b_k-m)\Re \p_{j,T_L}(\tfrac{1}{2}+i(t+\a_k)) \right) \\
&\times \prod_{\substack{\ell \leq m \\ \ell \neq k}} \prod_{j\leq L} \exp\left(2\b_\ell\Re \p_{j,T_L}(\tfrac{1}{2}+i(t+\a_\ell)) \right) w(t/T) dt,
\end{align*}
and 
\begin{align*}
\J = \int_{\G_m} \prod_{k\leq m}\prod_{j\leq L} \exp\left(2\b_k\Re \p_{j,T_L}(\tfrac{1}{2}+i(t+\a_k)) \right) w(t/T) dt.
\end{align*}
Therefore, Theorem \ref{thm:main} will follow from the following three propositions.

\begin{prop}\label{prop:JBound}
 For large $T$
\[
\J \ll_{\bm{\b}}  T (\log T)^{\b_1^2 + \cdots + \b_m^2} \prod_{1\leq j < k \leq m} |\zeta(1 + i(\a_j - \a_k) + 1/ \log T )|^{2\b_j \b_k}.
\]
\end{prop}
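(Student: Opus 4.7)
The plan is to approximate the integrand by a product of short Dirichlet polynomials, apply a mean value theorem, and compute the resulting Euler product to match the claimed bound. On $\G_m$ the condition $|\p_{j,T_L}(\tfrac12 + i(t+\a_k))| \leq K_j$ allows me to invoke Lemma \ref{lem:expTaylorSeries} to replace each $\exp(\b_k \p_{j,T_L}(\tfrac12+i(t+\a_k)))$ by $\n_{j,T_L}(\tfrac12+i(t+\a_k);\b_k)$ up to a rapidly decaying relative error; the cumulative multiplicative error over $j\leq L$ and $k\leq m$ is an $O_{\bm\b}(1)$ constant that can be made arbitrarily close to $1$ by taking $\d$ small. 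Using nonnegativity of the integrand I drop the restriction $t\in \G_m$ to obtain
\[
\J \ll_{\bm\b} \int_T^{2T}\left|\prod_{k\leq m}\prod_{j\leq L}\n_{j,T_L}(\tfrac12 + i(t+\a_k);\b_k)\right|^2 w(t/T)\, dt.
\]

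Denote the polynomial inside the absolute value by $D(t) = \sum_n c_n(\bm\a,\bm\b) n^{-1/2-it}$. Each $\n_{j,T_L}$ is supported on integers whose prime factors lie in $(T_{j-1}, T_j]$ with at most $100\b_\ast^2 K_j$ such factors (counted with multiplicity), so $D$ has length at most $\prod_j T_j^{100m\b_\ast^2 K_j} \leq T^{C_{\bm\b}\d^{1/4}}$, well below $T^{1-\eta}$ for $\d$ small. The Montgomery--Vaughan mean value theorem gives $\int_T^{2T}|D(t)|^2 w(t/T)\,dt \ll T \sum_n |c_n|^2/n$. Because the prime ranges $(T_{j-1},T_j]$ are disjoint, this sum factors as a product over primes. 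Expanding $\prod_k \n_{j,T_L}(s_k; \b_k)$ via the multinomial theorem shows that the coefficient of $n^{-s}$ for $n$ supported in $(T_{j-1}, T_j]$ equals $\prod_{p\mid n}(a_p!)^{-1}(\sum_k \b_k g_{p,L} p^{-i\a_k})^{a_p}$, where $g_{p,L} := p^{-1/\log T_L}\log(T_L/p)/\log T_L$. Upon extending the truncation to infinity with negligible tail error, each local factor becomes $1 + p^{-1}|\sum_k \b_k g_{p,L}p^{-i\a_k}|^2 + O(p^{-2})$. Taking logs and summing over $p \leq T_L$, the diagonal contribution $(\sum_k \b_k^2)\sum_{p\leq T_L}g_{p,L}^2/p$ equals $(\sum_k\b_k^2)(\log\log T + O(1))$ by Mertens' theorem and exponentiates to $(\log T)^{\sum\b_k^2}$; the off-diagonal part $2\sum_{j<k}\b_j\b_k\Re\sum_{p\leq T_L}g_{p,L}^2 p^{i(\a_k-\a_j)-1}$ equals $2\sum_{j<k}\b_j\b_k\log|\zeta(1+i(\a_j-\a_k)+2/\log T_L)| + O(1)$ by the identity $\log\zeta(s) = \sum_p p^{-s} + O(1)$ for $\Re s > 1$.

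The hard part will be the last identification, which requires uniform control of $\log|\zeta|$ near the $1$-line: when some $|\a_j-\a_k|$ is small, $|\zeta(1+i(\a_j-\a_k)+1/\log T)|$ grows like $\log T$, so errors in approximating the prime sum by $\log\zeta$ must remain genuinely bounded rather than absorbed as multiplicative factors. A Lipschitz-type argument comparing $|\zeta(1+i\t+\sigma)|$ at $\sigma = 2/\log T_L$ and $\sigma = 1/\log T$ (both $\asymp 1/\log T$, since $\log T_L \asymp \log T$) lets me exchange these two shifts at a multiplicative cost depending only on $\bm\b$, producing the stated bound. The $p^2$ term included in $\p_{1,X}$ is present precisely to make the match with $\log\zeta$ clean; its contribution is bounded and absorbed into the $O(1)$ remainder.
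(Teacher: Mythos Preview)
Your proposal is correct and follows essentially the same route as the paper: replace the exponentials by the truncated polynomials $\n_{j,T_L}$ via Lemma~\ref{lem:expTaylorSeries}, extend the integral to $[T,2T]$ by positivity, apply a mean-value theorem to the resulting short Dirichlet polynomial, and identify the Euler product with the claimed expression via a prime sum estimate of the type in Lemma~\ref{lem:cosPrimeSum}. The only cosmetic difference is packaging: the paper invokes the splitting Lemma~\ref{lem:Splitting} together with Proposition~3.3 of \cite{CurranCorrelations} to obtain the Euler product and its $O(e^{-50\b_\ast^2 K_j})$ truncation error in one stroke, whereas you apply Montgomery--Vaughan directly and sketch the Rankin-style tail bound and the comparison $|\zeta(1+i\theta+2/\log T_L)|\asymp_{\bm\b}|\zeta(1+i\theta+1/\log T)|$ by hand.
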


\begin{prop}\label{prop:I0Bound}
Assuming the Riemann hypothesis, for large $T$
\[
|\I_0| \gg_{\bm{\b}}  T (\log T)^{\b_1^2 + \cdots + \b_m^2} \prod_{1\leq j < k \leq m} |\zeta(1 + i(\a_j - \a_k) + 1/ \log T )|^{2\b_j \b_k}.
\]
\end{prop}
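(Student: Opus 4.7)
The plan is to compute $\I_0$ asymptotically and show that its main term matches the claimed lower bound. The underlying heuristic, in the spirit of Radziwiłł--Soundararajan, is that under RH
\[
\zeta(\tfrac{1}{2}+iu) \approx \exp(\p(\tfrac{1}{2}+iu)), \qquad \p := \sum_{j\leq L}\p_{j,T_L},
\]
so that $\zeta(s_k)\exp((\b_k-1)\p(s_k))\approx \exp(\b_k\p(s_k))$ at $s_k = \tfrac{1}{2} + i(t+\a_k)$. Under this approximation, the integrand of $\I_0$ collapses to $\prod_k |\exp(\b_k \p(s_k))|^2$---essentially the integrand of $\J$. Since the main term of $\J$ is positive and of the claimed size by the proof of Proposition \ref{prop:JBound}, this yields the lower bound on $|\I_0|$.

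To execute this, I would first apply Lemma \ref{lem:expTaylorSeries} on $\G_m$ to replace each exponential $\exp(\b\p_{j,T_L})$ by its Dirichlet polynomial approximation $\n_{j,T_L}(\cdot;\b)$; the resulting error is negligible since $|\p_{j,T_L}|\leq K_j$ on $\G_m$ and the Taylor polynomial is truncated at $100\b_\ast^2 K_j$. Next I would remove the restriction to $\G_m$: standard moment bounds for the Dirichlet polynomials $\p_{j,T_L}$ show that $[T,2T]\setminus\G_m$ has measure $o(T)$, and Cauchy--Schwarz combined with mean-square bounds for the $\n_{j,T_L}$ absorbs its contribution.

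The main step is to evaluate the resulting twisted first moment
\[
\tilde{\I}_0 := \int_T^{2T}\prod_k\zeta(\tfrac{1}{2}+i(t+\a_k))\prod_j\n_{j,T_L}(\tfrac{1}{2}+i(t+\a_k);\b_k-1)\cdot\overline{\prod_{j}\n_{j,T_L}(\tfrac{1}{2}+i(t+\a_k);\b_k)}\, w(t/T)\,dt.
\]
The combined Dirichlet polynomial has length $T^{o(1)}$ for $\d$ small, well within the range of standard twisted first-moment techniques. Expanding the Dirichlet polynomials monomial by monomial, each resulting integral of the form $\int_T^{2T}\prod_k\zeta(\tfrac{1}{2}+i(t+\a_k))\, c(t)\, w(t/T)\,dt$ (with $c(t)$ a pure Dirichlet polynomial) can be computed by shifting contours past the critical line and using RH to bound contributions from the critical strip. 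Reassembling the diagonal main terms produces an Euler product whose local factors---by essentially the same computation as in the proof of Proposition \ref{prop:JBound}---combine to give
\[
c_{\bm\b}\cdot T(\log T)^{\b_1^2+\cdots+\b_m^2}\prod_{1\leq j<k\leq m}|\zeta(1+i(\a_j-\a_k)+1/\log T)|^{2\b_j\b_k}
\]
for a positive constant $c_{\bm\b}$.

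The main obstacle is the twisted first-moment evaluation. One must handle $m$ shifted $\zeta$-factors simultaneously with long Dirichlet polynomials and correctly identify the shift-dependent product $|\zeta(1+i(\a_j-\a_k)+1/\log T)|^{2\b_j\b_k}$ as the Euler-product main term; this requires delicate bookkeeping---effectively a first-moment analogue of the CFKRS recipe. Under RH, pointwise bounds for $\zeta$ just to the right of the critical line are needed to make the contour-shift errors acceptable uniformly in the shifts $\a_k\in[-T/2,T/2]$.
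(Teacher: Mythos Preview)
Your overall strategy matches the paper's: replace exponentials by $\n_{j,T_L}$ on $\G_m$, extend to $[T,2T]$, and compute the twisted shifted first moment (the paper's $\J_1$). The genuine gap is in removing the restriction to $\G_m$. Your claim that $[T,2T]\setminus\G_m$ has measure $o(T)$ is false: for $j$ near $L$ one has $K_j=c_j^{-3/4}\asymp\d^{-3/4}$, a \emph{fixed constant}, while $\p_{j,T_L}$ has variance $\sum_{p\in(T_{j-1},T_j]}1/p\asymp 1$, so the set $\{|\p_{j,T_L}|>K_j\}$ has measure $\asymp Te^{-c\d^{-3/2}}$, a fixed proportion of $T$. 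A H\"older bound of the shape $(\text{meas})^{1/3}(\int\prod_k|\zeta(s_k)|^3)^{1/3}(\int|\n|^6)^{1/3}$ therefore gives only $Te^{-c(\d)}(\log T)^C$ with $C$ depending on $m,\bm\b$ but not on $\d$; this is not $o$ of the main term $\asymp T\prod_{p\leq T_L}(1+\sum_{j,k}\b_j\b_k p^{-1-i(\a_j-\a_k)})$ once $C>\sum_k\b_k^2$.

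The paper instead stratifies $[T,2T]\setminus\G_m=\bigsqcup_{A,n}\B_{A,n}$ according to the \emph{smallest} index $n$ for which some $\p_{n,T_s}$ exceeds $K_n$. On $\B_{A,n}$ the sums $\p_j$ with $j<n$ are still bounded by $K_j$, so Lemma~\ref{lem:logZetaUpperBound} applied with $X=T_{n-1}$ gives $\prod_k|\zeta(s_k)|\ll e^{m/c_{n-1}}\prod_k\prod_{j<n}|\n_{j,T_{n-1}}(s_k;\tfrac12)|^2$; one then inserts $|\p_{n,T_s}/K_n|^{2\lceil 1/20c_n\rceil}\geq 1$, extends to $[T,2T]$, and computes a mean square of short Dirichlet polynomials. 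Each $\B_{A,n}$ contributes $\ll\exp(1/c_{n-1}-\tfrac{1}{40c_n}\log(1/c_n)+O(L-n))$ times the main term, and the sum over $n$ is made $<1/2$ by taking $\d$ small. This stratified use of RH via Lemma~\ref{lem:logZetaUpperBound} is the missing idea in your proposal. (A smaller point: the evaluation of $\J_1$ itself is \emph{unconditional}, via the one-sided approximate functional equation of Lemma~\ref{lem:AFEpure} and the convexity bound $\zeta(-\tfrac12+it)\ll 1+|t|$; RH is not used for the contour shift there.)
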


\begin{prop}\label{prop:IkBound}
Assuming the Riemann hypothesis, for large $T$ and $1\leq k \leq m$
\[
\I_k \ll_{\bm{\b}}  T (\log T)^{\b_1^2 + \cdots + \b_m^2} \prod_{1\leq j < k \leq m} |\zeta(1 + i(\a_j - \a_k) + 1/ \log T )|^{2\b_j \b_k}.
\]
\end{prop}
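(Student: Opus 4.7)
The strategy is to apply Soundararajan's conditional pointwise upper bound to the factor $|\zeta(\tfrac{1}{2}+i(t+\a_k))|^{2m}$ appearing in $\I_k$, converting it into the exponential of the Dirichlet polynomials $\p_{j,T_L}$ evaluated at the shift $\a_k$. Combining this with the factor $\prod_{j} \exp(2(\b_k-m)\Re \p_{j,T_L})$ already present, the effective weight at the shift $\a_k$ becomes $2m + 2(\b_k - m) = 2\b_k$, matching the weight at every other shift. The resulting upper bound for the integrand is then exactly the integrand of $\J$, and Proposition \ref{prop:JBound} delivers the stated bound.

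\textbf{Execution.} Assuming the Riemann hypothesis, a version of Soundararajan's pointwise bound, tailored so that the Dirichlet polynomial on the right-hand side is precisely $\sum_j \p_{j,T_L}$ (in the form used in \cite{Chandee} and in the author's earlier work \cite{CurranCorrelations}), yields, for every $t \in [T/2, 5T/2]$,
\[
\log |\zeta(\tfrac{1}{2}+it)| \leq \Re \sum_{j=1}^{L} \p_{j,T_L}(\tfrac{1}{2}+it) + O_{\bm\b}(1).
\]
Multiplying by $2m$ and exponentiating gives $|\zeta(\tfrac{1}{2}+it)|^{2m} \ll_{\bm{\b}} \prod_{j \leq L} \exp(2m\,\Re \p_{j,T_L}(\tfrac{1}{2}+it))$. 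Inserting this bound into the definition of $\I_k$ at the shift $t+\a_k$ and combining the exponentials shift-by-shift via $2m + 2(\b_k-m) = 2\b_k$ produces
\[
\I_k \ll_{\bm\b} \int_{\G_m} \prod_{\ell \leq m} \prod_{j \leq L} \exp\!\left(2\b_\ell\,\Re \p_{j,T_L}(\tfrac{1}{2}+i(t+\a_\ell))\right) w(t/T)\,dt \;=\; \J.
\]
Proposition \ref{prop:JBound} then closes the argument. Note that $\exp(2(\b_k-m)\Re \p)$ is a positive quantity regardless of the sign of $\b_k - m$, so multiplying an upper bound on $|\zeta|^{2m}$ by it preserves the direction of the inequality.

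\textbf{Main obstacle.} The substantive technical point is arranging Soundararajan's pointwise bound in exactly the form above. His inequality naturally outputs a sum over primes $p \leq T_L$ together with a smoothed prime-square sum extending to $p \leq \sqrt{T_L}$, whereas $\p_{1,T_L}$ contains only the unsmoothed truncation $\tfrac{1}{2}\sum_{p\leq \log T} p^{-2s}$. One must verify that the difference contributes only $O_{\bm\b}(1)$ pointwise---this is where the shift $1/\log T_L$ appearing in $\p_{j,T_L}$ and the smoothing factor $\log(T_L/p)/\log T_L$ do their work, mirroring the handling already carried out in \cite{CurranCorrelations}. The main Soundararajan error $(\log T)/\log T_L \leq 1/c_L$ depends only on $\bm\b$ through $\d$, so it is absorbed into the implicit constant. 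Once this matching is established, Step 2 is purely algebraic and the proof concludes immediately via Proposition \ref{prop:JBound}.
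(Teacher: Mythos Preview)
Your proof is correct and shares the same opening move as the paper: apply Lemma~\ref{lem:logZetaUpperBound} with $X=T_L$ to bound $|\zeta(\tfrac12+i(t+\a_k))|^{2m}$ by $\exp\bigl(2m\sum_{j\le L}\Re\p_{j,T_L}(\tfrac12+i(t+\a_k))\bigr)$, using $\log T/\log T_L=1/c_L=O_{\bm\b}(1)$ and that $\sqrt{T_L}\ge \log T$ so the prime-square sums match.  Where you diverge is in the next step: you combine exponents via $2m+2(\b_k-m)=2\b_k$ and recognise the resulting integrand as exactly that of $\J$, then invoke Proposition~\ref{prop:JBound} directly.  The paper instead keeps the weights $m$ and $\b_k-m$ separate, applies Lemma~\ref{lem:expTaylorSeries} to each factor, and performs a fresh mean-value calculation for the product $\n_{T_L}(\cdot;m)\,\n_{T_L}(\cdot;\b_k-m)\prod_{\ell\ne k}\n_{T_L}(\cdot;\b_\ell)$.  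Your shortcut is legitimate and strictly cleaner; the paper's version is somewhat redundant here, though it does make transparent that the coefficient analysis still goes through when the exponents at a single shift do not collapse so neatly.
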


To prove Propositions \ref{prop:JBound} and \ref{prop:IkBound}, 
we will use  the definition of $\G_m$ to show that
\[
\J \ll \int_T^{2T} \prod_{k\leq m} |\n_{T_L}(\tfrac{1}{2} + i (t+\a_k);\b_k) |^2 dt
\]
and
\[
\I_k \ll \int_{\G_m} |\zeta(\tfrac{1}{2} + i (t + \a_k))|^{2m} |\n_{T_L}(\tfrac{1}{2} + i (t+\a_k); \b_k - m) |^2 \prod_{\substack{\ell \leq m \\ \ell \neq k}} |\n_{T_L}(\tfrac{1}{2} + i (t+\a_\ell);\b_\ell) |^2  dt,
\]
where 
\[
\n_X(s;\b) := \prod_{j\leq L} \n_{j,X}(s;\b).
\]
Therefore $\J$ can be controlled by computing the mean value of a Dirichlet polynomial, and the $\I_k$ can be bounded by computing twisted $2m^{\text{th}}$ moments of the zeta function.
Both of these quantities can be controlled as $\n_{T_L}(s;\b)$ is a short Dirichlet polynomial.
For the latter task, we will need to invoke the Riemann hypothesis when $m > 2$.

The proof of Proposition \ref{prop:I0Bound} is a bit more difficult. The first step is to show that $\I_0$ is approximately equal to 
\begin{align*}
\int_{\G_m} \prod_{k\leq m} \zeta(\tfrac{1}{2} + i(t+\a_k)) \n_{T_L}(\tfrac{1}{2} + i(t + \a_k); \tfrac{1}{2}(\b_k - 1))^2 \overline{\n_{T_L}(\tfrac{1}{2} + i(t + \a_k); \tfrac{1}{2}\b_k)}^2 w(t/T) \ dt
\end{align*}
using the definition of $\G_m$ and the Riemann hypothesis.
To compute this quantity, we will write 
\[
\int_{\G_m} = \int_{T}^{2T} - \int_{[T,2T] \setminus \G_m}.
\]
The integral over the entire interval $[T,2T]$ is a shifted pure $m^\text{th}$ moment of zeta twisted by a Dirichlet polynomial, which we can calculate unconditionally because $\n_{T_L}(s;\b)$ is a short Dirichlet polynomial.
To bound the integral over $[T,2T] \setminus \G_m$, we will first decompose this set according to which subsum $\p_{j,X}(\tfrac{1}{2}+ i(t + \a_\ell);\b)$ is unusually large for some $\ell$, and then bound  the contribution of such $t$ by
\begin{align*}
\frac{1}{K_j^{2r}}\int_{T}^{2T} \prod_{k\leq m} |\zeta(\tfrac{1}{2} + i(t+\a_k))| &\cdot  |\n_{T_L}(\tfrac{1}{2} + i(t + \a_k); \tfrac{1}{2}(\b_k - 1)) \n_{T_L}\tfrac{1}{2} + i(t + \a_k); \tfrac{1}{2}\b_k)|^2 \\
&\quad \times|\p_{j,X}(\tfrac{1}{2} + i (t + \a_\ell))|^{2r} \ dt
\end{align*}
for some large integer $r$. To bound this quantity we are again forced to use the Riemann hypothesis, at least when $m > 2$.
We can estimate this integral using the same method in \cite{CurranCorrelations}.

We now have all the necessary tools to begin our proof of Theorem \ref{thm:main}.
To help simplify the notation going forward, we will omit subscripts depending on $\bm{\b}$ from all big-O or Vinogradov asymptotic notation.
The reader should keep in mind that all of the implicit constants in this notation can, and usually will, depend on $\bm{\b}$.
We will also implicitly assume that $T$ is sufficiently large in terms of $\bm{\b}$.
After covering some preliminary results, we will prove Proposition \ref{prop:JBound}, followed by Proposition \ref{prop:I0Bound}, and we will conclude the proof by establishing Proposition \ref{prop:IkBound}.

\section*{Acknowledgements}
\noindent
The author would like to thank Hung M. Bui and Stephen Lester for helpful discussions.

\section{Preliminary Results}

To control the size of zeta on the half line, we will use the following lemma due to Soundararajan \cite{SRHMoments} and Harper \cite{Harper}.

\begin{lemma}\label{lem:logZetaUpperBound}
Assume RH, let $t\in [T,2T]$, and $|\a| \leq T/2$. Then for $2\leq X \leq T^2$ 
\begin{align*}
\log|\zeta(\tfrac{1}{2} &+ i (t + \a))|\leq \Re \sum_{p \leq X} \frac{1}{p^{1/2 + 1/\log X+ i(t + \a)}} \frac{\log X/p}{\log X}  
\\ + &\sum_{p\leq \min(\sqrt{X}, \log T)} \frac{1}{2 p^{1 + 2 i (t +\a)}} + \frac{\log T}{\log X} + O(1).
\end{align*}
\end{lemma}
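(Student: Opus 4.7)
The plan is the convexity method of Soundararajan, as refined by Harper, carried out in three steps: a convexity reduction from the critical line to the nearby line $\Re s = \sigma_0 := 1/2 + 1/\log X$, a smoothed Perron-type expansion of $\log\zeta$ on that line, and a clean-up of the resulting prime-power sums. Write $s_0 = 1/2 + i(t+\a)$.

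For the convexity step, I would begin with the Hadamard factorization identity
\[
\frac{\zeta'}{\zeta}(s) = \sum_\rho \frac{1}{s-\rho} + O(\log T),
\]
valid uniformly in a bounded horizontal strip at height $\asymp T$. Integrating along the horizontal segment from $s_0$ to $\sigma_0 + i(t+\a)$ yields
\[
\log\zeta(\sigma_0 + i(t+\a)) - \log\zeta(s_0) = \sum_\rho \log\frac{\sigma_0 + i(t+\a)-\rho}{s_0-\rho} + O\!\left(\frac{\log T}{\log X}\right).
\]
Taking real parts and using that under RH every zero $\rho$ satisfies $|s_0 - \rho| \leq |\sigma_0 + i(t+\a) - \rho|$, so that the zero-sum contribution is nonnegative, gives $\log|\zeta(s_0)| \leq \log|\zeta(\sigma_0 + i(t+\a))| + O(\log T/\log X)$. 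This is the only essential use of RH.

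For the Perron step, starting from $\log\zeta(s) = \sum_n \Lambda(n)/(n^s\log n)$ valid for $\Re s > 1$, I would apply the smoothed Perron identity
\[
\sum_{n \leq X}\frac{\Lambda(n)}{n^s \log n}\frac{\log(X/n)}{\log X} = \frac{1}{2\pi i \log X}\int_{(c)}\log\zeta(s+w)\frac{X^w}{w^2}\,dw
\]
at $s = \sigma_0 + i(t+\a)$. Shifting the contour past the double pole at $w=0$ recovers $\log\zeta(s)$; the shifted-contour contribution is $O(\log T/\log X)$ under RH, via the Riemann--von Mangoldt zero-counting formula and the quadratic decay of the kernel $X^w/w^2$ off the axis. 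Splitting the resulting truncated Dirichlet sum by prime power: the $p^k$ with $k \geq 3$ contribution is $O(1)$ absolutely; the $p$ terms produce the first sum in the statement; and the $p^2$ terms reduce to the second sum after absorbing into $O(\log T/\log X) + O(1)$ the weight $\log(X/p^2)/\log X$, the shift $p^{-2/\log X}$ in the denominator, and the tail $\log T < p \leq \sqrt X$.

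The main subtlety is engineering the sharp coefficient $1$ in front of $\log T/\log X$ in the Perron step, which drives the specific choice of the linear weight $\log(X/n)/\log X$ over, say, a sharp cutoff; a careful accounting of the zero contribution is the most delicate point, and is the key content of the Soundararajan--Harper argument.
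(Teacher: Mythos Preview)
The paper does not give its own proof of this lemma; it simply quotes the result as ``due to Soundararajan \cite{SRHMoments} and Harper \cite{Harper}'' and moves on. Your sketch is essentially the Soundararajan--Harper argument, so in that sense you are aligned with what the paper invokes.

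One technical wrinkle in your Perron step: working directly with $\log\zeta(s+w)$ in the contour integral is awkward, because the singularities at $w = \rho - s$ are logarithmic branch points rather than poles, so ``shifting past'' them is not a clean residue computation. Moreover, the residue of $\log\zeta(s+w)\,X^w/w^2$ at the double pole $w=0$ is $\log\zeta(s)\log X + \tfrac{\zeta'}{\zeta}(s)$, not $\log\zeta(s)\log X$ alone; the extra $\tfrac{1}{\log X}\tfrac{\zeta'}{\zeta}(\sigma_0 + iu)$ term happens to have real part bounded above by $O(\log T/\log X)$ under RH (since $\Re\tfrac{\zeta'}{\zeta}(\sigma_0+iu)$ equals a nonnegative zero sum plus $O(\log T)$), so the inequality survives, but this should be said. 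In Soundararajan's actual proof the Perron step is applied to $-\tfrac{\zeta'}{\zeta}$ rather than $\log\zeta$, which turns the branch points into simple poles and makes the zero contribution transparent; the passage to $\log|\zeta|$ is then done by integrating in $\sigma$, which is where the positivity under RH enters. Your two steps are morally the same ingredients in a slightly different order, and you correctly flag the zero accounting as the delicate point, so this is a refinement rather than a gap.
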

\noindent
We will often Taylor expand exponentials of the $\p_{j,X}$ in order to turn moment generating function calculations into computing mean values of short Dirichlet polynomials– a much simpler task.
The following lemma will be used frequently to accomplish this task.
\begin{lemma}\label{lem:expTaylorSeries}
If $\b \leq \b_\ast$ and $|\p_{j,X}(s)| \leq K_j$ for some $1\leq j \leq L$, then
\[
\exp(\b \p_{j,X}(s)) = (1+ O(e^{-50\b_\ast^2 K_j}))^{-1} \n_{j,X}(s;\b).
\]
\end{lemma}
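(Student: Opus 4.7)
The plan is to bound the tail of the Taylor expansion of $\exp(\b \p_{j,X}(s))$ beyond degree $100\b_\ast^2 K_j$ directly. Writing
\[
\exp(\b \p_{j,X}(s)) - \n_{j,X}(s;\b) = \sum_{m > 100 \b_\ast^2 K_j} \frac{(\b \p_{j,X}(s))^m}{m!},
\]
the hypotheses give $|\b \p_{j,X}(s)| \leq \b_\ast K_j$, so each term is bounded by $(\b_\ast K_j)^m / m!$. I would then use Stirling's approximation $m! \geq (m/e)^m$ to write
\[
\frac{(\b_\ast K_j)^m}{m!} \leq \left(\frac{e \b_\ast K_j}{m}\right)^m \leq \left(\frac{e}{100 \b_\ast}\right)^m
\]
for every $m > 100 \b_\ast^2 K_j$. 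Since $\b_\ast = \sum_k \max(1,\b_k) \geq 1$, we have $e/(100\b_\ast) \leq e^{-3}$, so each term is at most $e^{-3m}$.

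Next, since consecutive terms shrink by a factor of at least $\b_\ast K_j / (m+1) \leq 1/(100 \b_\ast) \leq 1/2$, the tail is dominated by a geometric series: summing over $m > 100 \b_\ast^2 K_j$ gives
\[
\left|\exp(\b \p_{j,X}(s)) - \n_{j,X}(s;\b)\right| \ll e^{-300 \b_\ast^2 K_j}.
\]
Finally, to convert this absolute bound into the multiplicative statement, I would use the lower bound
\[
|\exp(\b \p_{j,X}(s))| = \exp(\b \Re \p_{j,X}(s)) \geq e^{-\b_\ast K_j},
\]
so that dividing yields
\[
\frac{\n_{j,X}(s;\b)}{\exp(\b \p_{j,X}(s))} = 1 + O\bigl(e^{-300 \b_\ast^2 K_j + \b_\ast K_j}\bigr) = 1 + O\bigl(e^{-50 \b_\ast^2 K_j}\bigr),
\]
which rearranges to the stated identity.

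There is no real obstacle here; the lemma is essentially a quantitative statement that truncating the exponential series at an order much larger than the input magnitude produces exponentially small error. The only step that requires mild care is verifying that $100\b_\ast^2 K_j$ dominates $\b_\ast K_j$ with the right constant, which follows since $\b_\ast \geq 1$ and $K_j = c_j^{-3/4} \to \infty$ as $T \to \infty$. The constant $50$ in the exponent of the error is chosen with slack to spare relative to the $300$ produced by the Stirling bound.
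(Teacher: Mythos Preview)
Your proof is correct and follows essentially the same approach as the paper's: bound the Taylor tail by Stirling to get an absolute error of size $e^{-c\b_\ast^2 K_j}$, then divide by the lower bound $|\exp(\b\p_{j,X}(s))|\geq e^{-\b_\ast K_j}$ to upgrade to a multiplicative error. The paper's own argument is the same two-step structure, just stated more tersely (it records the tail as $O(e^{-100\b_\ast^2 K_j})$ and the lower bound as $e^{-2\b_\ast K_j}$, with less justification).
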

\begin{proof}
Since $|\p_{j,X}(s)| \leq 2 K_j$, Taylor expansion gives
\[
 \n_{j,X}(s;\b)  =  \exp(\b \p_{j,X}(s)) + O(e^{-100\b_\ast^2 K_j}).
\]
By assumption $\exp(-2 K_j \b_\ast) \leq |\exp(\b \p_{j,X}(s))| \leq \exp(2 K_j \b_\ast)$, so the claim follows.
\end{proof}

Once we have reduced the proof to a number of mean value calculations, we will use a few standard results to compute these averages.
The first result is due to Montgomery and Vaughan (see for example theorem 9.1 of \cite{IK}).
\begin{lemma}\label{lem:MVDP}
Given any complex numbers $a_n$
\[
\int_{T}^{2T} \left|\sum_{n\leq N} \frac{a_n}{n^{i t}}\right|^2 dt = (T + O(N)) \sum_{n\leq N} |a_n|^2.
\]
\end{lemma}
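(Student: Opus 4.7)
The plan is to expand the square, pull out the diagonal, and bound the off-diagonal via a Hilbert-type inequality. Writing
\[
\int_T^{2T} \Big|\sum_{n\leq N} \frac{a_n}{n^{it}}\Big|^2 dt = \sum_{m,n \leq N} a_m \overline{a_n} \int_T^{2T} \Big(\frac{n}{m}\Big)^{it} dt,
\]
the diagonal terms $m = n$ contribute exactly $T \sum_{n\leq N} |a_n|^2$, which is the advertised main term. For $m \neq n$ the inner integral equals $((n/m)^{2iT} - (n/m)^{iT})/(i \log(n/m))$, and after absorbing the phases $e^{\pm i T \log n}$ into modified coefficients, each of the two resulting pieces is a bilinear form of the shape $\sum_{m\neq n} b_m \overline{b_n}/\log(n/m)$ with $|b_n| = |a_n|$.

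To bound such a bilinear form I would invoke the Montgomery--Vaughan Hilbert-type inequality: for distinct reals $\lambda_1, \ldots, \lambda_N$ with $\delta_n := \min_{m\neq n} |\lambda_m - \lambda_n|$,
\[
\Big|\sum_{m\neq n} \frac{b_m \overline{b_n}}{\lambda_m - \lambda_n}\Big| \ll \sum_n \frac{|b_n|^2}{\delta_n}.
\]
Taking $\lambda_n = \log n$, the elementary estimate $\log(n/(n-1)) \geq 1/n$ gives $\delta_n \geq 1/n$, so each off-diagonal piece is bounded by $\sum_{n\leq N} n |a_n|^2 \leq N \sum_n |a_n|^2$. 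Combining with the diagonal contribution yields the claimed error of size $O(N) \sum_n |a_n|^2$.

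The principal obstacle is establishing the Montgomery--Vaughan inequality itself. A naive approach using $|\log(n/m)| \gg |m-n|/\max(m,n)$ together with a dyadic decomposition in $|m-n|$ produces only $O(N \log N) \sum_n |a_n|^2$, which would insert an unwanted logarithmic factor into every subsequent mean value computation in the paper. The sharp $O(N)$ saving requires the weighted quadratic form argument of Montgomery and Vaughan, in which one introduces auxiliary weights $w_n$ and exploits a nearly diagonal identity for the Hilbert matrix to reduce the bilinear form to $\sum_n |b_n|^2/\delta_n$. Once this inequality is taken as a black box (as is standard and as the paper does by appealing to \cite{IK}), the remainder of the proof is routine bookkeeping.
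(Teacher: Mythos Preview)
Your argument is correct and is the standard Montgomery--Vaughan proof. The paper itself gives no proof of this lemma at all: it simply attributes the result to Montgomery and Vaughan and points to theorem~9.1 of \cite{IK}. So there is nothing to compare against; you have supplied the expected argument where the paper only gives a citation. Your remark that a crude $|\log(n/m)| \gg |m-n|/N$ estimate would lose a logarithm, and that the sharp $O(N)$ error genuinely requires the weighted Hilbert inequality, is accurate and matches why the paper is content to quote the result as a black box.
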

\noindent
We will also make use of the property that  Dirichlet polynomials supported on distinct sets of primes are approximately independent in the mean square sense. 
The precise formulation we will use is the following splitting lemma which appears in equation (16) of \cite{HS}.
\begin{lemma}\label{lem:Splitting}
Suppose for $1\leq j \leq J$ we have $j$ disjoint intervals $I_j$ and Dirichlet polynomials $A_j(s)= \sum_{n} a_j(n)n^{-s}$ such that $a_j(n)$ vanishes unless $n$ is composed of primes in $I_j$.
Then if $\prod_{j\leq J} A_j(s)$ is a Dirichlet polynomial of length $N$
\begin{align*}
\int_T^{2T} \prod_{j\leq J} |A_j(\tfrac{1}{2} + i t)|^2 dt= (T + O(N))\prod_{j\leq J}\left(\frac{1}{T}\int_T^{2T}|A_j(\tfrac{1}{2} + i t)|^2 dt\right)
\end{align*}
\end{lemma}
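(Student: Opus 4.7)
The plan is to reduce the identity to two applications of Lemma \ref{lem:MVDP}, exploiting the disjointness of the prime supports $I_j$ to factor the coefficient data cleanly. First I would expand $\prod_{j\leq J} A_j(s) = \sum_n b(n) n^{-s}$ and observe that, because the $I_j$ are disjoint, every $n$ occurring in this sum factors uniquely as $n = n_1 \cdots n_J$ with $n_j$ composed only of primes in $I_j$. This gives $b(n) = \prod_j a_j(n_j)$ and hence the multiplicative identity $|b(n)|^2/n = \prod_j |a_j(n_j)|^2/n_j$.

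Armed with this factorization, I would apply Lemma \ref{lem:MVDP} directly to the length-$N$ Dirichlet polynomial $\prod_j A_j$ to obtain
\[
\int_T^{2T} \prod_{j\leq J} |A_j(\tfrac{1}{2} + it)|^2 \, dt = (T + O(N)) \prod_{j\leq J} \sum_{n_j} \frac{|a_j(n_j)|^2}{n_j}.
\]
Separately, applying Lemma \ref{lem:MVDP} to each factor $A_j$ individually (of length $N_j$, with $\prod_j N_j = N$) gives
\[
\frac{1}{T}\int_T^{2T} |A_j(\tfrac{1}{2} + it)|^2 \, dt = \bigl(1 + O(N_j/T)\bigr) \sum_{n_j} \frac{|a_j(n_j)|^2}{n_j}.
\]
Taking the product over $j$ recovers precisely the factor $\prod_j \sum_{n_j} |a_j(n_j)|^2/n_j$ appearing in the first display, up to an accumulated error.

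The one step calling for a bit of care is controlling this accumulated error when multiplying $J$ per-factor estimates. Using nonnegativity of each $\sum_{n_j} |a_j(n_j)|^2/n_j$, the errors compound multiplicatively to $1 + O(\sum_j N_j/T)$; since $\sum_j N_j \leq \prod_j N_j = N$ as soon as each $N_j \geq 2$ (trivial factors of length one can be absorbed without loss), this produces an error of size $O(N/T)$. Combining with the $(T + O(N))$ prefactor from the first display, and noting that for $N \gg T$ the claim is vacuous, gives exactly the $T + O(N)$ prefactor in the stated identity and closes the proof.
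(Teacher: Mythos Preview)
Your argument is correct and is the standard route: apply Lemma~\ref{lem:MVDP} once to the full product and once to each factor, and use the unique factorisation $n=n_1\cdots n_J$ coming from disjoint prime supports to split $\sum_n |b(n)|^2/n$ as $\prod_j \sum_{n_j}|a_j(n_j)|^2/n_j$. The paper does not supply its own proof of this lemma; it simply quotes it as equation~(16) of \cite{HS}, and what you have written is essentially the argument given there.

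One small remark on the bookkeeping: your claim that $\sum_j N_j \le \prod_j N_j = N$ is fine once each $N_j\ge 2$, and the case $N \gg T$ is indeed irrelevant in all applications (in this paper one always has $N\le T^{1/10}$), but ``vacuous'' is not quite the right word---strictly speaking one would still need an inequality of the form $\text{LHS}\ll N\cdot\prod_j \tfrac1T\int|A_j|^2$ in that regime. In practice the lemma is only ever invoked with $N=o(T)$, so your reduction to $\prod_j(1+O(N_j/T))=1+O(N/T)$ covers everything that is actually needed.
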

\noindent
Finally, the following result due to Soundararajan \cite[lemma 3]{SRHMoments} will simplify calculations for high moments of Dirichlet polynomials supported on primes.
\begin{lemma}\label{lem:MVDPPrimes}
Let $r$ be a natural number and suppose $N^r \leq T/\log T$. Then given any complex numbers $a_p$
\[
\int_{T}^{2T} \left|\sum_{p\leq N} \frac{a_p}{p^{i t}}\right|^{2r} dt \ll T r! \left(\sum_{p\leq N} |a_p|^2\right)^r.
\]
\end{lemma}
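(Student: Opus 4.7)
The plan is to reduce the $2r$-th moment to an ordinary second moment of a longer Dirichlet polynomial by means of the identity $|f|^{2r} = |f^r|^2$, apply Lemma \ref{lem:MVDP} to that second moment, and identify the resulting $\ell^2$ sum with a multinomial expansion of $\bigl(\sum_p|a_p|^2\bigr)^r$.

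First I would expand
\[
\Bigl(\sum_{p\leq N}\frac{a_p}{p^{it}}\Bigr)^r = \sum_{n\leq N^r}\frac{b_n}{n^{it}},
\]
where, by the multinomial theorem and the uniqueness of prime factorisation, $b_n$ vanishes unless $n = p_1^{k_1}\cdots p_s^{k_s}$ with distinct primes $p_i\leq N$ and $k_1+\cdots+k_s = r$, in which case $b_n = \tfrac{r!}{k_1!\cdots k_s!}\, a_{p_1}^{k_1}\cdots a_{p_s}^{k_s}$. The resulting Dirichlet polynomial has length at most $N^r \leq T/\log T$, so Lemma \ref{lem:MVDP} immediately gives
\[
\int_T^{2T}\Bigl|\sum_{p\leq N}\frac{a_p}{p^{it}}\Bigr|^{2r} dt \;=\; \int_T^{2T}\Bigl|\sum_n\frac{b_n}{n^{it}}\Bigr|^2 dt \;=\; \bigl(T + O(N^r)\bigr)\sum_n|b_n|^2 \;\ll\; T\sum_n|b_n|^2.
\]

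Next I would compare $\sum_n|b_n|^2$ with the multinomial expansion
\[
\Bigl(\sum_{p\leq N}|a_p|^2\Bigr)^r = \sum_n \frac{r!}{k_1!\cdots k_s!}\prod_{i=1}^{s}|a_{p_i}|^{2k_i},
\]
where the sum is again over $n = p_1^{k_1}\cdots p_s^{k_s}$ with $\sum k_i = r$. Since $|b_n|^2 = \bigl(\tfrac{r!}{k_1!\cdots k_s!}\bigr)^2 \prod_i|a_{p_i}|^{2k_i}$, using the trivial bound $\tfrac{r!}{k_1!\cdots k_s!}\leq r!$ on one of the two multinomial factors yields
\[
\sum_n |b_n|^2 \leq r!\sum_n \frac{r!}{k_1!\cdots k_s!}\prod_{i}|a_{p_i}|^{2k_i} = r!\Bigl(\sum_{p\leq N}|a_p|^2\Bigr)^r,
\]
which combined with the previous display produces the claimed bound.

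There is no real obstacle here: the lemma is a clean combinatorial identity wrapped around Montgomery--Vaughan, and the only substantive point is the fact that the summation variables $p_i$ are prime, which ensures that the diagonal terms in the Montgomery--Vaughan step correspond precisely to multisets of primes. This matching is what allows the bound $\bigl(\tfrac{r!}{k_1!\cdots k_s!}\bigr)^2 \leq r!\cdot\tfrac{r!}{k_1!\cdots k_s!}$ to collapse the sum into $r!(\sum_p|a_p|^2)^r$, yielding the gain of a factor $r!$ over the a priori bound $(r!)^2(\sum_p|a_p|^2)^r$ one would get from Cauchy--Schwarz alone.
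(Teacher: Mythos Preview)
Your argument is correct and is essentially Soundararajan's original proof of this lemma in \cite[Lemma~3]{SRHMoments}; the paper does not reprove it but simply cites that reference. The only minor remark is that the final paragraph slightly overstates the role of primality: what is actually used is unique factorisation so that distinct multisets of primes give distinct integers $n$, making the Montgomery--Vaughan diagonal sum $\sum_n|b_n|^2$ match the multinomial expansion term by term.
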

\noindent

To compute all the mean values that will arise, we will need two more ingredients.
The first is a special case of lemma 3.2 of \cite{Koukoulopoulos}, which will let us estimate a sum over primes that will appear frequently throughout the paper.
\begin{lemma}\label{lem:cosPrimeSum}
Given $\d \in \R$ and $X \geq 2$  
\begin{align*}
\sum_{p\leq X} &\frac{\cos(\d \log p)}{p}  = \log|\zeta(1 + 1/\log X +i\d)| + O(1).
\end{align*}
\end{lemma}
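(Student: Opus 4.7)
The plan is to rewrite the sum as the real part of a truncated prime Dirichlet series and compare it to $\log \zeta$ evaluated at the nearby point $s = \sigma + i\d$, where $\sigma := 1 + 1/\log X$. Since $\cos(\d \log p) = \Re p^{-i\d}$ and since $\zeta$ has no zeros for $\Re s > 1$ (so that $\log|\zeta(\sigma + i\d)| = \Re \log \zeta(\sigma + i\d)$), the lemma reduces to the identity
\[
\sum_{p \leq X} \frac{1}{p^{1+i\d}} = \log \zeta(\sigma + i\d) + O(1),
\]
with an implicit constant uniform in $\d$ and $X \geq 2$.

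The starting point is the familiar expansion $\log \zeta(s) = \sum_p p^{-s} + O(1)$ for $\Re s > 1$, which comes from taking logarithms in the Euler product and noting that the contribution of higher prime powers $p^k$ with $k \geq 2$ is absolutely convergent and uniformly bounded on $\Re s \geq 1$. Applied at $s = \sigma + i\d$, this reduces the identity above to showing that both the tail $\sum_{p > X} p^{-(\sigma + i\d)}$ and the head difference $\sum_{p \leq X}(p^{-(1+i\d)} - p^{-(\sigma + i\d)})$ are $O(1)$.

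For the tail, the trivial bound $|\sum_{p > X} p^{-(\sigma + i\d)}| \leq \sum_{p > X} p^{-\sigma}$ combined with Chebyshev's estimate on $\pi(t)$ and partial summation reduces the problem to $\int_X^\infty (t^{\sigma} \log t)^{-1} dt$; substituting $u = \log t$ and using $\sigma - 1 = 1/\log X$ converts this to $\int_{\log X}^\infty u^{-1} e^{-u/\log X} du \ll 1$. For the head difference, the bound $|1 - p^{-1/\log X}| \ll (\log p)/\log X$ valid for $p \leq X$ together with Mertens' estimate $\sum_{p \leq X}(\log p)/p = \log X + O(1)$ produces a contribution of size $O(1)$. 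Combining these estimates and taking real parts yields the lemma.

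There is no serious obstacle here; the main thing to verify is that all error terms are independent of $\d$, which is the case above. The estimate then behaves sensibly both as $\d \to 0$ (both sides diverge like $\log \log X$, one via the pole of $\zeta$ at $s=1$ and the other via Mertens) and as $|\d| \to \infty$ (both sides remain bounded, since $\zeta$ is bounded and bounded away from zero on vertical lines with $\Re s > 1$).
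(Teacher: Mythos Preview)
Your argument is correct. The paper does not give its own proof of this lemma; it simply records it as a special case of Lemma~3.2 of \cite{Koukoulopoulos}. Your direct approach---splitting off the higher prime powers to pass from $\log\zeta(\sigma+i\d)$ to $\sum_p p^{-\sigma-i\d}$, bounding the tail $\sum_{p>X}p^{-\sigma}$ via Chebyshev and partial summation (the substitution reducing it to $\int_1^\infty e^{-v}v^{-1}\,dv$ is clean), and handling the head discrepancy with $|1-p^{-1/\log X}|\le(\log p)/\log X$ and Mertens---is entirely self-contained and avoids the external reference. The only remark is that the last paragraph of your write-up is a heuristic sanity check rather than part of the proof; everything needed for the $O(1)$ claim, uniform in $\d$, is already contained in the preceding estimates.
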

\noindent 
The second and final ingredient we will need is a good estimate for the coefficients of the Dirichlet polynomials
\[
\prod_{k\leq m}  \n_{j,X}(s+ i \a_k; \b_k) := \sum_{n} \frac{b_{j,X,\bm{\a},\bm{\b}}(n)}{n^s}. 
\]
To this end, denote $a_X(p):= \log(X/p)  p^{-1/\log X}/\log X$ and define multiplicative functions $g_X$ and $h_X$ by
\[
g_X(p^r;\b) :=  \frac{\b^r a_X(p)^{r}}{r!}
\]
and
\[
h_X(p^r;\b) :=  g_X(p^r; \b) + 1_{p\leq \log T} \sum_{t = 1}^{r/2} \frac{\b^{r-t} a_X(p)^{r-t}}{2^t t! (r-2t)!} .
\]
Next define $c_{1}(n)$ to be 1 if $n$ can be written as $n = n_1 \cdots n_r$ where $r \leq 100\b_\ast^2 K_1$ and each $n_i$ is either a prime $\leq T_1$ or a prime square $\leq \log T$. 
Finally for $2\leq j \leq L$ set $c_{j}(n)$ to be 1 if $n$ is the product of at most $100\b_\ast^2 K_j$ not necessarily distinct primes in $(T_{j-1},T_j]$.
In proposition 3.1 of \cite{CurranCorrelations} it was shown that

\begin{prop}\label{prop:Njcoeffs}
For $2\leq j \leq L$
\[
\n_{j,X}(s;\b) = \sum_{p\mid n \Rightarrow p \in (T_{j-1},T_j] } \frac{g_X(n;\b)c_{j}(n)}{n^s}.
\]
If
\[
\n_{1,X}(s;\b) = \sum_{p\mid n \Rightarrow p \in (T_{j-1},T_j]} \frac{f_X(n;\b)}{n^s}
\]
then $f_X(n;\b) \leq h_X(n;\b) c_{1}(n)$ and $f_X(p;\b) = g_X(p;\b)$.
\end{prop}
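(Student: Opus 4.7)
The plan is to compute the Dirichlet coefficients of $\n_{j,X}(s;\b)$ directly from its definition $\sum_{m \leq 100\b_\ast^2 K_j} \b^m \p_{j,X}(s)^m/m!$ by expanding the powers using the multinomial theorem.

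For $j \geq 2$, since $\p_{j,X}(s) = \sum_{T_{j-1} < p \leq T_j} a_X(p)/p^s$ is a linear combination of $1/p^s$ over primes in $(T_{j-1}, T_j]$, the multinomial theorem gives that, for any $n = \prod p^{r_p}$ with all $p \in (T_{j-1}, T_j]$ and $\sum r_p = m$, the coefficient of $1/n^s$ in $(\b^m/m!)\p_{j,X}(s)^m$ equals $\prod_p \b^{r_p} a_X(p)^{r_p}/r_p!$, which is precisely $g_X(n;\b)$. Summing over $m \leq 100\b_\ast^2 K_j$ restricts to $\Omega(n) \leq 100\b_\ast^2 K_j$, which is exactly the defining condition of $c_j(n) = 1$.

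For $j = 1$, I would split $\p_{1,X}(s) = A(s) + B(s)$, where $A(s) = \sum_{p \leq T_1} a_X(p)/p^s$ and $B(s) = \sum_{p \leq \log T} 1/(2p^{2s})$. Expanding $(A+B)^m$ via the binomial theorem and then each $A^\ell$ and $B^k$ via the multinomial theorem, one regroups by $n$. Since $A$ and $B$ are sums over the disjoint monomial sets $\{1/p^s\}$ and $\{1/p^{2s}\}$, the Dirichlet coefficients factor over primes, and for a prime $p$ appearing with total exponent $r_p$ in $n$, the local contribution is
\[
F_p(r_p;\b) \;=\; \sum_{\substack{a+2b=r_p\\ b\geq 1\,\Rightarrow\, p\leq \log T}} \frac{\b^{a+b} \, a_X(p)^a}{a!\,b!\,2^b}.
\]
The $b=0$ term equals $g_X(p^{r_p};\b)$, so when $r_p = 1$ only this term contributes and $f_X(p;\b) = g_X(p;\b)$. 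For $r_p \geq 2$, each $b \geq 1$ term is bounded against the corresponding term of $h_X(p^{r_p};\b)$ using $0 < a_X(p) \leq 1$, and taking the product over primes yields $f_X(n;\b) \leq h_X(n;\b)$. Finally, the truncation $m = \ell + k \leq 100\b_\ast^2 K_1$ forces $n$ to be a product of at most $100\b_\ast^2 K_1$ atoms drawn from $\{\text{primes} \leq T_1\}\cup\{p^2: p \leq \log T\}$, which is exactly the condition $c_1(n)=1$.

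The main subtlety lies in the $j=1$ case: primes $p \leq \log T$ can contribute both linearly (from $A$) and quadratically (from $B$) to the same exponent $r_p$, so one must carefully organize the double sum over decompositions $(a,b)$ with $a + 2b = r_p$ when regrouping. Tracking that the truncation parameter $m = a + b$ counts the number of atoms used (rather than $\Omega(n) = a + 2b$) is the key combinatorial point that distinguishes $c_1(n)$ from $c_j(n)$ for $j \geq 2$; the rest of the argument is routine bookkeeping with the multinomial theorem.
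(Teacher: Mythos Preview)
The paper does not give its own proof here; the proposition is simply quoted from proposition~3.1 of \cite{CurranCorrelations}. Your multinomial-expansion argument is exactly the natural route to establish it, and your treatment of the case $j\geq 2$ is correct.

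For $j=1$ the outline is right, but two points need tightening. First, the coefficients of the \emph{truncated} polynomial $\n_{1,X}$ do not literally factor over primes: the constraint $\sum_p(a_p+b_p)\leq 100\b_\ast^2 K_1$ couples different primes. What is true (and sufficient) is that every summand is nonnegative, so dropping this global constraint yields the upper bound
\[
f_X(n;\b)\ \leq\ c_1(n)\prod_p F_p(r_p;\b),
\]
the factor $c_1(n)$ recording that if no decomposition meets the atom bound then $f_X(n;\b)=0$. Second, your appeal to $0<a_X(p)\leq 1$ to bound $F_p$ against $h_X$ goes the wrong way with the definition of $h_X$ printed in this paper: the $b$-th term of $F_p(r;\b)$ carries $a_X(p)^{r-2b}$ while the $t=b$ term of $h_X(p^r;\b)$ carries $a_X(p)^{r-b}$, so $a_X(p)\leq 1$ gives $F_p\geq h_X$, not $\leq$. (Already for $n=p^2$ with $p\leq\log T$ one has $f_X(p^2;\b)=\tfrac12\b^2 a_X(p)^2+\tfrac12\b$ versus $h_X(p^2;\b)=\tfrac12\b^2 a_X(p)^2+\tfrac12\b\,a_X(p)$.) This is almost certainly a transcription slip in the present paper --- the exponent $r-t$ in the definition of $h_X$ should read $r-2t$ --- and with that correction $F_p(r;\b)=h_X(p^r;\b)$ exactly, so no inequality is needed. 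Your identification of the truncation parameter with the atom count $\sum_p(a_p+b_p)$ rather than with $\Omega(n)$ is the correct key observation.
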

\noindent
Therefore  $b_{1,X,\bm{\a},\bm{\b}}(n)$ is the $m$-fold Dirichlet convolution of $f_X(n;\b_k) n^{-i\a_k}$ and $b_{j,X,\bm{\a},\bm{\b}}(n)$ is the $m$-fold convolution of $g_X(n;\b_k) c_{j}(n) n^{-i\a_k}$ for $2 \leq j \leq L$.
We will make use of two more sets of coefficients.
Let  $b'_{j,X,\bm{\a},\bm{\b}}(n)$ be the $m$-fold  convolution of $h_X(n;\b_k) n^{-i\a_k} 1_{p|n \Rightarrow p\in (T_0,T_1]}$ when $j=1$ and  the $m$-fold  convolution  of $g_X(n;\b_k) n^{-i\a_k}1_{p|n \Rightarrow p\in (T_{j-1},T_j]}$ when $2\leq j \leq L$.
Finally, let $b''_{1,X,\bm{\a},\bm{\b}}(n)$ be the $m$-fold  convolution of $h_X(n;\b_k) 1_{p|n \Rightarrow p\in (T_0,T_1]}$ when $j= 1$ and the $m$-fold  convolution of $g_X(n;\b_k) 1_{p|n \Rightarrow p\in (T_{j-1},T_j]}$ when  $2\leq j \leq L$.
Note $b'_{j,X,\bm{\a},\bm{\b}}$ and $b''_{j,X,\bm{\a},\bm{\b}}$ are multiplicative, and $|b_{j,X,\bm{\a},\bm{\b}}(n)| , |b'_{j,X,\bm{\a},\bm{\b}}(n)| \leq b''_{j,X,\bm{\a},\bm{\b}}(n)$.
All the information we will need about these coefficients is contained in lemma 3.2 of \cite{CurranCorrelations}.

\begin{lemma}\label{lem:nCoeffPatrol}

For $1\leq j \leq L$ and $p \in (T_{j-1},T_j]$
\[
b_{j, X, \bm{\a}, \bm{\b}}(p) = a_X(p) \sum_{k = 1}^m \b_k p^{-i\a_k},
\]
and $b''_{j, X, \bm{\a}, \bm{\b}}(p) \leq \b_\ast$. If $r \geq 2$ 
\[
b''_{j, X, \bm{\a}, \bm{\b}}(p^r) \leq \frac{\b_\ast^r m^r}{r!}
\]
holds whenever $2\leq j \leq L$ or $p > \log T$, and otherwise
\[
b''_{1, X, \bm{\a}, \bm{\b}}(p^r) \leq  m \b_\ast^r r^{2m} e^{-r \log (r/m)/2m + 2r}.
\]

\end{lemma}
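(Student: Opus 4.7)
The three claims correspond to three regimes that I would treat in turn. First, the formula for $b_{j,X,\bm{\a},\bm{\b}}(p)$ and the bound $b''_{j,X,\bm{\a},\bm{\b}}(p) \leq \b_\ast$ are nearly immediate from Proposition \ref{prop:Njcoeffs}. Each of $f_X$, $g_X$, $h_X$ is multiplicative with value $1$ at $n = 1$; at a prime $p$ in the relevant interval they all reduce to $\b a_X(p)$, since the correction sum in the definition of $h_X$ is empty when $r = 1$. Because the $m$-fold Dirichlet convolution evaluated at a single prime $p$ picks out $\sum_{k=1}^m(\text{factor}_k)(p)$ (at most one factor can be non-trivial there), one obtains $b_{j,X,\bm{\a},\bm{\b}}(p) = a_X(p)\sum_k\b_k p^{-i\a_k}$, and the bound for $b''$ follows from $a_X(p)\leq 1$ and $\sum_k\b_k\leq\b_\ast$.

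For $r \geq 2$ when $2\leq j\leq L$ or $p > \log T$, only $g_X$ contributes. I would expand the $m$-fold convolution at $p^r$ and apply the multinomial theorem:
\[
b''_{j,X,\bm{\a},\bm{\b}}(p^r) = \sum_{r_1+\cdots+r_m = r}\prod_{k=1}^m\frac{\b_k^{r_k}a_X(p)^{r_k}}{r_k!} = \frac{a_X(p)^r}{r!}\Bigl(\sum_{k=1}^m\b_k\Bigr)^r \leq \frac{\b_\ast^r}{r!} \leq \frac{\b_\ast^r m^r}{r!}.
\]
The extra factor $m^r$ is not needed here but is presumably retained in the statement for uniformity.

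The substantive case is $j = 1$ with $p \leq \log T$, where the prime-square correction in $h_X$ must be controlled. My plan is to exploit the generating-function identity
\[
\sum_{r\geq 0}h_X(p^r;\b)\, z^r = \exp\bigl(\b a_X(p)(z + z^2/2)\bigr),
\]
obtained by interchanging the two sums in the definition of $h_X$ and substituting $s = r - 2t$. Taking the $m$-fold product, using $\sum_k\b_k\leq\b_\ast$ and $a_X(p)\leq 1$, and invoking the positivity of all Taylor coefficients gives
\[
b''_{1,X,\bm{\a},\bm{\b}}(p^r) \leq [z^r]\exp\bigl(\b_\ast(z + z^2/2)\bigr).
\]
The main obstacle will be extracting a bound of precisely the stated form. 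I would apply Cauchy's inequality $[z^r]f(z)\leq \rho^{-r}f(\rho)$ at a convenient radius $\rho > 0$; the saddle-point choice $\b_\ast\rho(1+\rho) = r$ in fact produces a bound far sharper than what is asserted, leaving considerable slack. Choosing instead a small power of $r/m$ (such as $\rho = (r/m)^{1/(2m)}$, so that $\rho^{-r} = e^{-r\log(r/m)/(2m)}$), absorbing $\exp(\b_\ast(\rho + \rho^2/2))$ into the $e^{2r}$ factor, and handling the remaining combinatorial terms with Stirling should reproduce the claimed bound $m\b_\ast^r r^{2m}e^{-r\log(r/m)/(2m) + 2r}$; the presence of $e^{2r}$ in the statement signals that the lemma is deliberately phrased loosely so that it can be applied uniformly in the sequel.
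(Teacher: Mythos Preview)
The paper does not actually prove this lemma; it is quoted as lemma~3.2 of the author's earlier paper \cite{CurranCorrelations}, so there is no in-paper argument to compare against directly.

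Your proposal is essentially correct and would constitute a valid proof. The first two claims are handled cleanly: the convolution formula at a prime and the multinomial identity for $g_X$ are exactly right, and you correctly observe that the factor $m^r$ in the second bound is superfluous. For the third claim your generating-function identity
\[
\sum_{r\ge 0} h_X(p^r;\b)\,z^r \;=\; \exp\bigl(\b\, a_X(p)\,(z+z^2/2)\bigr)
\]
is correct for $p\le \log T$, and taking the $m$-fold product followed by Cauchy's inequality at radius $\rho=(r/m)^{1/(2m)}$ does produce the factor $e^{-r\log(r/m)/(2m)}$.

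One small imprecision deserves attention. Applying Cauchy directly to $\exp(\b_\ast(z+z^2/2))$ yields a factor $\exp(\b_\ast(\rho+\rho^2/2))$, and ``absorbing this into $e^{2r}$'' would require $\b_\ast(\rho+\rho^2/2)\le 2r$, which is not uniform in $\b_\ast$; moreover it leaves unexplained where the prefactor $\b_\ast^r$ comes from. The clean fix is to first use $\b_\ast\ge 1$ to bound
\[
[z^r]\exp\bigl(\b_\ast(z+z^2/2)\bigr) \;=\; \sum_{t\le r/2}\frac{\b_\ast^{\,r-t}}{2^t\,t!\,(r-2t)!} \;\le\; \b_\ast^{\,r}\,[z^r]\exp(z+z^2/2),
\]
and only then apply Cauchy at $\rho=(r/m)^{1/(2m)}$. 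Since $\rho\le r^{1/2}\le r$ and $\rho^2\le r^{1/m}\le r$ for $r\ge 2$, one has $\rho+\rho^2/2\le 2r$, giving the $e^{2r}$, and the remaining factors $m\,r^{2m}$ in the stated bound are simply slack. No appeal to Stirling is needed.
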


\section{Proof of Proposition \ref{prop:JBound}}

We will begin with the evaluation of 
\begin{align*}
\J = \int_{\G_m} \prod_{k\leq m}\prod_{j\leq L} \exp\left(2\b_k\Re \p_{j,T_L}(\tfrac{1}{2}+i(t+\a_k)) \right) w(t/T) dt,
\end{align*}
which is the simplest computation.
To compute this mean value, we apply lemma \ref{lem:expTaylorSeries} and then extend the integral over the entire set $[T,2T]$ to find 
\begin{align*}
\J \ll \int_T^{2T} \prod_{k\leq m} |\n_{T_L}(\tfrac{1}{2}+i(t+\a_k) ; \b_k)|^2 dt.
\end{align*}
Here we have used that $\prod_{j\leq L} (1 +O(e^{-50\b_\ast^2 K_j}))^2 = O(1)$.
Note $\n_{j,X}(s;\b)$ is a Dirichlet polynomial of length $\leq T_j^{100m\b_\ast^2 K_j}$ when $j > 1$ and of length $\leq T_1^{200m\b_\ast^2 K_1}$ when $j=1$,  so 
\[
\prod_{k\leq m} \n_{X}(s + i \a_k ; \b_k)
\]
has length at most $T_1^{200m\b_\ast^2 K_1} T_2^{100m\b_\ast^2 K_2} \cdots T_L^{100m\b_\ast^2 K_L} \leq T^{1/10}$ by choice of  $T_j, K_j$ and $L$.
Therefore using lemma \ref{lem:Splitting} in tandem with proposition 3.3 of \cite{CurranCorrelations} we find
\begin{align*}
\J &\ll  T \prod_{j \leq L} \left(\prod_{p\in (T_{j-1},T_j]} \left(1 + \frac{|b_{j, T_L, \bm{\a}, \bm{\b}}(p)|^2}{p} + O\left(\frac{1}{p^2}\right)\right) + O(e^{-50\b_\ast^2 K_j}) \right)\\
&\ll  T (\log T)^{\b_1^2 + \cdots + \b_m^2} \prod_{1\leq j < k \leq m} |\zeta(1 + i(\a_j - \a_k) + 1/ \log T )|^{2\b_j \b_k},
\end{align*}
where the latter bound follows from an application of lemma \ref{lem:cosPrimeSum}. This concludes the proof of Proposition \ref{prop:JBound}. \qed

\section{Proof of Proposition \ref{prop:I0Bound}}

Recall 
\begin{align*}
\I_0 &= \int_{\G_m} \prod_{k\leq m} \zeta(\tfrac{1}{2} + i(t+\a_k))\\
&\times \prod_{j\leq L} \exp\left((\b_k-1) \p_{j,T_L}(\tfrac{1}{2}+i(t+\a_k)) + \b_k  \p_{j,T_L}(\tfrac{1}{2}-i(t+\a_k)) \right) w(t/T) \ dt.
\end{align*}
To compute this, we will first need to replace the exponential by a Dirichlet polynomial.
To accomplish this, we will write $\I_0$ as a telescoping sum and control the size of the intermediate increments.
For $0 \leq J \leq L$ denote
\begin{align*}
& \qquad\qquad\qquad \I_0^{(J)} = \int_{\G_m} \prod_{k\leq m} \zeta(\tfrac{1}{2} + i(t+\a_k))\\
\times \prod_{j\leq J} &\n_{j,T_L}(\tfrac{1}{2} + i(t + \a_k); \tfrac{1}{2}(\b_k - 1))^2 \overline{\n_{j,T_L}(\tfrac{1}{2} + i(t + \a_k); \tfrac{1}{2}\b_k)}^2 \\ 
\times \prod_{J < j\leq L} &\exp\left((\b_k-1) \p_{j,T_L}(\tfrac{1}{2}+i(t+\a_k)) + \b_k  \p_{j,T_L}(\tfrac{1}{2}-i(t+\a_k)) \right) w(t/T) \ dt.
\end{align*}
Since $\I_0 = \I_0^{(0)}$, we may decompose
\[
\I_0 = \I_0^{(L)} - \sum_{J \leq L} \left(\I_0^{(J)} - \I_0^{(J-1)}\right)
\]
To prove Proposition \ref{prop:I0Bound} we will give a lower bound for $|I_0^{(L)}|$ and then show that
\[
\sum_{J\leq L} \big|\I_0^{(J)} - \I_0^{(J-1)} \big| \leq \frac{|I_0^{(L)}|}{2}.
\]
To control these differences, we will use the following consequence of lemma \ref{lem:expTaylorSeries} and the definition of $\G_m$.

\begin{lemma}\label{lem:incrementBound}
For $0\leq J \leq L$
\begin{align*}
&\big|\I_0^{(J)} - \I_0^{(J-1)} \big| \\
\ll &e^{-50\b_\ast^2 K_J} \int_{\G_m} \prod_{k\leq m} |\zeta(\tfrac{1}{2} + i(t+\a_k))| \cdot  |\n_{T_L}(\tfrac{1}{2} + i(t + \a_k); \tfrac{1}{2}(\b_k - 1)) \n_{T_L}\tfrac{1}{2} + i(t + \a_k); \tfrac{1}{2}\b_k)|^2  dt.
\end{align*}
\end{lemma}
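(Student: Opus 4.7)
The plan is to exploit the fact that the integrands of $\I_0^{(J)}$ and $\I_0^{(J-1)}$ coincide in every factor except the $j=J$ term inside the inner product over $k$: the former contains the Dirichlet polynomial $\n_{J,T_L}(\tfrac{1}{2}+i(t+\a_k);\tfrac{1}{2}(\b_k-1))^2 \overline{\n_{J,T_L}(\tfrac{1}{2}+i(t+\a_k);\tfrac{1}{2}\b_k)}^2$, whereas the latter contains the exponential $\exp((\b_k-1)\p_{J,T_L}(\tfrac{1}{2}+i(t+\a_k)) + \b_k \p_{J,T_L}(\tfrac{1}{2}-i(t+\a_k)))$. Since $\p_{J,T_L}$ has real Dirichlet coefficients, $\p_{J,T_L}(\tfrac{1}{2}-iu) = \overline{\p_{J,T_L}(\tfrac{1}{2}+iu)}$, so this exponential factors symmetrically as $\exp(\tfrac{1}{2}(\b_k-1)\p_{J,T_L})^2 \cdot \exp(\tfrac{1}{2}\b_k \overline{\p_{J,T_L}})^2$, matching the Dirichlet polynomial structure factor-by-factor.

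The main tool is Lemma \ref{lem:expTaylorSeries}, which is directly applicable because $t \in \G_m$ forces $|\p_{J,T_L}(\tfrac{1}{2}+i(t+\a_k))| \leq K_J$ for every $k$, and the relevant parameters $\tfrac{1}{2}|\b_k-1|, \tfrac{1}{2}\b_k$ are each at most $\b_\ast$. Applying the lemma to each half-exponential and then squaring shows that the $j=J$ factor for the $k$-th shift in $\I_0^{(J-1)}$ equals $(1+O(e^{-50\b_\ast^2 K_J}))^{-4}$ times the corresponding Dirichlet polynomial factor in $\I_0^{(J)}$. Taking the difference and telescoping across the $m = O_{\bm{\b}}(1)$ shifts then yields a multiplicative error of size $O(e^{-50\b_\ast^2 K_J})$ against the full Dirichlet polynomial form.

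To finish, I convert the surviving $j > J$ exponential factors --- identical in both $\I_0^{(J)}$ and $\I_0^{(J-1)}$ --- back into Dirichlet polynomials by another application of Lemma \ref{lem:expTaylorSeries} on $\G_m$. Because $K_j$ grows exponentially in $j$, the product $\prod_{j>J}(1+O(e^{-50\b_\ast^2 K_j}))^{4m}$ is $O(1)$, so the cumulative error is harmless. Assembling everything, the difference is bounded by $e^{-50\b_\ast^2 K_J}$ times $\int_{\G_m} \prod_k |\zeta(\tfrac{1}{2}+i(t+\a_k))| \prod_{j \leq L}|\n_{j,T_L}(\tfrac{1}{2}+i(t+\a_k);\tfrac{1}{2}(\b_k-1))\n_{j,T_L}(\tfrac{1}{2}+i(t+\a_k);\tfrac{1}{2}\b_k)|^2 dt$, and recognizing the inner product over $j$ as $|\n_{T_L}(\tfrac{1}{2}+i(t+\a_k);\tfrac{1}{2}(\b_k-1))\n_{T_L}(\tfrac{1}{2}+i(t+\a_k);\tfrac{1}{2}\b_k)|^2$ together with the bound $w(t/T) \leq 1$ gives the stated inequality. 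There is no real obstacle here: once the symmetric factorization of the $j=J$ exponential is set up, the argument is mechanical, and the only bookkeeping subtlety is ensuring that the cumulative multiplicative errors from repeated uses of Lemma \ref{lem:expTaylorSeries} remain $O(1)$.
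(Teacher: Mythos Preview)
Your proof is correct and matches the paper's approach exactly: the paper simply states that the lemma is a consequence of Lemma~\ref{lem:expTaylorSeries} and the definition of $\G_m$, and you have supplied precisely those details. One small correction: you write that ``$K_j$ grows exponentially in $j$,'' but in fact $K_j = c_j^{-3/4} = (\log_2 T)^{3/2} e^{-3j/4}$ \emph{decreases} in $j$. The product $\prod_{j>J}(1+O(e^{-50\b_\ast^2 K_j}))^{4m}$ is still $O(1)$, but for a different reason: writing $\ell = L-j$ one has $K_{L-\ell} = K_L e^{3\ell/4}$, so the sum $\sum_{j\leq L} e^{-50\b_\ast^2 K_j}$ is dominated by a rapidly convergent series and is $\ll e^{-50\b_\ast^2 K_L} = O(1)$ (the paper uses this same estimate in the proof of Proposition~\ref{prop:JBound} and again in the proof of Proposition~\ref{prop:I0Bound}).
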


To prove Proposition \ref{prop:I0Bound}, we will now just need the following two estimates.

\begin{prop}\label{prop:I0Llower}
\[
|\I_{0}^{(L)}| \gg  T \prod_{p\leq T_L} \left(1 + \sum_{1\leq j,k \leq m} \frac{\b_j \b_k}{p^{1 + i(\a_j-\a_k)}} \right).
\]
\end{prop}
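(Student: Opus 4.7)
I would decompose
\[
\I_0^{(L)} = \int_T^{2T} F(t)\,w(t/T)\,dt \;-\; \int_{[T,2T]\setminus \G_m} F(t)\,w(t/T)\,dt,
\]
where $F(t)$ is the integrand of $\I_0^{(L)}$. The first integral will produce the claimed main term, and the second will be shown to be of smaller order.

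\textbf{Main term.} Write the twist $\prod_k\n_{T_L}(\tfrac12+i(t+\a_k);\tfrac12(\b_k-1))^2\overline{\n_{T_L}(\tfrac12+i(t+\a_k);\tfrac12\b_k)}^2$ as $A(t)\overline{B(t)}$. Squaring the $T^{1/10}$ length bound from the proof of Proposition \ref{prop:JBound}, this is a Laurent Dirichlet polynomial of effective length $\leq T^{2/5}$. Expanding each $\zeta(\tfrac12+i(t+\a_k))$ via the approximate functional equation, multiplying by $A(t)\overline{B(t)}$, and integrating in $t$, the non-oscillatory contribution comes from the diagonal $\prod_k n_k a_k=\prod_k b_k$ and yields a main term of order $T$ times an Euler product over $p\leq T_L$. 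For the local computation at $p$, the identity $b_{j,T_L,\bm{\a},\bm{\b}}(p)=a_{T_L}(p)\sum_k\b_k p^{-i\a_k}$ of Lemma \ref{lem:nCoeffPatrol} together with $(\n^2)'=2\n\n'$ give $\alpha_k(p)=(\b_k-1)a_{T_L}(p)$ and $\beta_k(p)=\b_k a_{T_L}(p)$, where $\alpha_k,\beta_k$ are the linear coefficients of the squared $\n$-polynomials. Summing the $O(p^{-1})$ contributions (arising from $n_{p,k_0}=b_{p,\ell_0}=1$ or $a_{p,k_0}=b_{p,\ell_0}=1$) yields local factor $1+a_{T_L}(p)^2\sum_{j,k}\b_j\b_k p^{-1-i(\a_j-\a_k)}+a_{T_L}(p)(1-a_{T_L}(p))\sum_{j,k}\b_k p^{-1-i(\a_j-\a_k)}+O(p^{-2})$. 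Since $\sum_{j,k}\b_j\b_k p^{-i(\a_j-\a_k)}=\big|\sum_k\b_k e^{i\a_k\log p}\big|^2\geq 0$ and $\sum_{p\leq T_L}(1-a_{T_L}(p))/p=O(1)$, the product over primes is bounded below by a positive multiple of $\prod_{p\leq T_L}(1+\sum_{j,k}\b_j\b_k p^{-1-i(\a_j-\a_k)})$, giving the desired lower bound.

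\textbf{Bad set.} For the second integral, decompose $[T,2T]\setminus\G_m=\bigcup_{j\leq L,\,\ell\leq m}E_{j,\ell}$ with $E_{j,\ell}=\{t:|\p_{j,T_L}(\tfrac12+i(t+\a_\ell))|>K_j\}$. On $E_{j,\ell}$ majorize $\mathbf{1}_{E_{j,\ell}}$ by $(|\p_{j,T_L}(\tfrac12+i(t+\a_\ell))|/K_j)^{2r}$ for large $r\asymp K_j$, and bound $|F(t)|$ by $\prod_k|\zeta(\tfrac12+i(t+\a_k))|\cdot|\n_{T_L}(\tfrac12+i(t+\a_k);\tfrac12(\b_k-1))\n_{T_L}(\tfrac12+i(t+\a_k);\tfrac12\b_k)|^2$. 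Hölder then reduces the resulting integral to a shifted $2m$-th moment of $\zeta$ twisted by a short Dirichlet polynomial and a factor $|\p_{j,T_L}|^{2r}$, which under the Riemann hypothesis (via Lemma \ref{lem:logZetaUpperBound}) can be bounded by the method of \cite{CurranCorrelations}. The gain $K_j^{-2r}$ with $r\asymp K_j$ produces decay $e^{-cK_j}$, summable over $j,\ell$, so the bad set contributes negligibly compared to the main term.

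\textbf{Main obstacle.} The hardest step will be identifying the local Euler factor at each prime in the form $1+\sum_{j,k}\b_j\b_k p^{-1-i(\a_j-\a_k)}$. The diagonal constraint $\prod_k n_k a_k=\prod_k b_k$ couples all $m$ factors together, and one must track the shift phases $p^{\pm i\a_k}$ and the $a_{T_L}(p)$-factors carefully to see how contributions from $\prod_k\zeta$, from $A$, and from $\overline{B}$ combine into the expected $|\sum_k\b_k p^{-i\a_k}|^2 p^{-1}$ shape. Secondary matters — the $\chi$-secondary terms of the approximate functional equation, off-diagonal estimates for the $m$-fold shifted product, and the prime-square corrections when $p\leq\log T$ — require some care but should follow standard techniques given the shortness of the twist.
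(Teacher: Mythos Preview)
Your overall architecture---split $\I_0^{(L)}$ into an integral over all of $[T,2T]$ minus the integral over the complement of $\G_m$, compute the first via an approximate functional equation and diagonal extraction, and control the second by high moments of the $\p_j$---is exactly the route the paper takes (it calls the two pieces $\J_1$ and $\J_2$). Your Euler-product computation is also correct; in fact the paper states the identity $q_j(p)+\tau_{\bm{\a}}(p)=r_j(p)$ a little loosely, and your extra term $a_{T_L}(p)(1-a_{T_L}(p))(\cdots)/p$ is precisely the $O(\log p/(p\log T_L))$ error it absorbs. One small simplification you are missing: the paper's Lemma~\ref{lem:AFEpure} is a one-sided approximate functional equation for the product $\prod_k\zeta(\tfrac12+it+i\a_k)$ (contour shift only to $\Re s=-1$), so there are no $\chi$-dual terms to worry about.

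The bad-set treatment is where your sketch has a genuine gap. Your decomposition $E_{j,\ell}=\{|\p_{j,T_L}(\tfrac12+i(t+\a_\ell))|>K_j\}$ together with $r\asymp K_j$ yields savings of size $e^{-cK_j}$, but for $j$ near $L$ one has $K_j\asymp\d^{-3/4}$, a fixed constant; if you then bound the resulting integral by a crude H\"older step, you pick up a factor $(\log T)^{O(1)}$ that this constant cannot absorb (remember $\J_1$ may be as small as $\gg T$). The paper avoids this by a finer decomposition: it partitions $[T,2T]\setminus\G_m$ into sets $\B_{A,n}$ indexed by the \emph{minimal} bad index $n$, so that on $\B_{A,n}$ all $\p_r$ with $r<n$ are small at every shift. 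This permits applying Lemma~\ref{lem:logZetaUpperBound} with $X=T_{n-1}$, after which every exponential can be legitimately Taylor-expanded; the mean-value computation (Propositions \ref{prop:RHMVDP}--4.9) then recovers the \emph{same} Euler product as $\J_1$, so the comparison $|\J_2|\leq|\J_1|/2$ reduces to a sum of constants depending only on $\d$. The paper also takes $r=\lceil 1/(20c_n)\rceil\asymp 1/c_n$, which is larger than your $K_n=c_n^{-3/4}$ and is the maximal value keeping the auxiliary polynomial short. The case $n=1$ is handled separately (there is no $T_0$ to truncate at) by a three-way H\"older and a measure bound on $\B_{A,1}$, which works because $K_1\asymp(\log_2 T)^{3/2}$ gives super-polynomial savings in $\log T$. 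Your invocation of ``the method of \cite{CurranCorrelations}'' would ultimately unravel into this same refined decomposition, so it is not wrong, but as written your $E_{j,\ell}$ step does not itself close the argument.
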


\begin{prop}\label{prop:RHtwisted}
Assuming the Riemann hypothesis
\begin{align*}
\int_{\G_m} \prod_{k\leq m} |\zeta(\tfrac{1}{2} + i(t + &\a_k))| \cdot  |\n_{T_L}(\tfrac{1}{2} + i(t + \a_k); \tfrac{1}{2}(\b_k - 1)) \n_{T_L}\tfrac{1}{2} + i(t + \a_k); \tfrac{1}{2}\b_k)|^2  dt \\
&\ll  T \prod_{p\leq T_L} \left(1 + \sum_{1\leq j,k \leq m} \frac{\b_j \b_k}{p^{i(\a_j-\a_k)}} \right).
\end{align*}

\end{prop}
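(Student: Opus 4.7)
The plan is to invoke the Riemann hypothesis through Lemma \ref{lem:logZetaUpperBound} to replace each first-power factor $|\zeta(\tfrac{1}{2}+i(t+\a_k))|$ by an exponential of a short Dirichlet polynomial, after which the definition of $\G_m$ combined with Lemma \ref{lem:expTaylorSeries} lets us rewrite the entire integrand as a product of squared moduli of short Dirichlet polynomials. The remaining mean value computation then proceeds along the same lines as the proof of Proposition \ref{prop:JBound}.

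First I would apply Lemma \ref{lem:logZetaUpperBound} with $X=T_L$. By construction of the $\p_{j,T_L}$, the prime sums on the right hand side of the lemma precisely match $\Re \sum_{j\leq L} \p_{j,T_L}(\tfrac{1}{2}+i(t+\a))$, and the additive term $\log T/\log T_L + O(1) = 1/c_L + O(1)$ is $O_{\bm{\b}}(1)$ since $c_L > \d/e$ by maximality of $L$. Exponentiating gives
\[
|\zeta(\tfrac{1}{2}+i(t+\a_k))| \ll \exp\Bigl(\Re \sum_{j\leq L} \p_{j,T_L}(\tfrac{1}{2}+i(t+\a_k))\Bigr)
\]
pointwise on $[T,2T]$. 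On $\G_m$ we have $|\p_{j,T_L}(\tfrac{1}{2}+i(t+\a_k))|\leq K_j$ for every $j\leq L$ and $k\leq m$, and applying Lemma \ref{lem:expTaylorSeries} separately to $\exp(\tfrac{1}{2}\p_{j,T_L})$ and $\exp(\tfrac{1}{2}\overline{\p_{j,T_L}})$ yields $\exp(\Re \p_{j,T_L}(\tfrac{1}{2}+i(t+\a_k))) \ll |\n_{j,T_L}(\tfrac{1}{2}+i(t+\a_k); \tfrac{1}{2})|^2$, with total multiplicative error $\prod_{j,k}(1+O(e^{-50\b_\ast^2 K_j})) = O(1)$.

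The integrand on $\G_m$ is therefore bounded by
\[
\prod_{k\leq m}\left|\n_{T_L}\bigl(\tfrac{1}{2}+i(t+\a_k);\tfrac{1}{2}\bigr)\,\n_{T_L}\bigl(\tfrac{1}{2}+i(t+\a_k);\tfrac{\b_k-1}{2}\bigr)\,\n_{T_L}\bigl(\tfrac{1}{2}+i(t+\a_k);\tfrac{\b_k}{2}\bigr)\right|^2,
\]
which is non-negative, so the integral over $\G_m$ is bounded above by the integral over $[T,2T]$. The resulting product Dirichlet polynomial has length at most $T^{1/10}$ by the same estimate used in the proof of Proposition \ref{prop:JBound} (with three factors per $k$ in place of one), provided $\d$ is small enough in terms of $\bm{\b}$. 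Lemma \ref{lem:Splitting} then factors the mean value across the disjoint prime intervals $(T_{j-1},T_j]$ for $1\leq j\leq L$, and Lemma \ref{lem:MVDP} reduces each local factor to a sum of squared coefficient moduli.

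At a prime $p\in (T_{j-1},T_j]$ the three shifts $\tfrac{1}{2}, \tfrac{\b_k-1}{2}, \tfrac{\b_k}{2}$ add to $\b_k$, so the prime-level coefficient is $\sum_{k\leq m}\b_k a_{T_L}(p) p^{-i\a_k}$, whose squared modulus equals $a_{T_L}(p)^2\sum_{j',k'}\b_{j'}\b_{k'} p^{-i(\a_{j'}-\a_{k'})}$. Controlling the higher prime-power contributions by $O(1/p^2)$ via Lemma \ref{lem:nCoeffPatrol}, using $a_{T_L}(p)\leq 1$, and invoking the non-negativity of $\sum_{j',k'}\b_{j'}\b_{k'}p^{-i(\a_{j'}-\a_{k'})} = |\sum_{k'}\b_{k'} p^{-i\a_{k'}}|^2$, the Euler product over $p\leq T_L$ yields the claimed bound. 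The main obstacle is precisely the invocation of Lemma \ref{lem:logZetaUpperBound}, which is where the Riemann hypothesis is essential; without a pointwise upper bound of this form one cannot convert the first-power $|\zeta|$ into a manageable short Dirichlet polynomial. Once this reduction is in hand, the remainder of the proof is a routine Dirichlet polynomial mean value calculation identical in structure to that for $\J$, with the algebraic identity $\tfrac{1}{2} + \tfrac{\b_k-1}{2} + \tfrac{\b_k}{2} = \b_k$ ensuring that the prime-level coefficients agree with those that produced the main term for Proposition \ref{prop:JBound}.
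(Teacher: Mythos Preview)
Your proposal is correct and follows essentially the same approach as the paper's proof: apply Lemma \ref{lem:logZetaUpperBound} with $X=T_L$ to replace each $|\zeta|$ by $\exp(\Re\sum_j\p_{j,T_L})$, use the definition of $\G_m$ and Lemma \ref{lem:expTaylorSeries} to convert this to $|\n_{j,T_L}(\cdot;\tfrac{1}{2})|^2$, extend to $[T,2T]$, and then compute the resulting mean square via Lemma \ref{lem:Splitting}. The paper packages the final mean value computation as an appeal to Proposition \ref{prop:RHMVDP} with $n=L+1$, which amounts to exactly the coefficient analysis you sketch (including the key identity $\tfrac{1}{2}+\tfrac{\b_k-1}{2}+\tfrac{\b_k}{2}=\b_k$ at the prime level); the only step you leave slightly implicit is the Rankin trick needed to pass from the truncated coefficients $b_j$ to the multiplicative $b'_j$ at cost $O(e^{-50\b_\ast^2 K_j})$, but this is the standard manoeuvre already used in the proof of Proposition \ref{prop:JBound} to which you refer.
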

\noindent
Before proving these two statements, let us briefly see how they imply Proposition \ref{prop:I0Bound}.
\begin{proof}[Proof of Proposition  \ref{prop:I0Bound}]

Combining Propositions \ref{prop:RHtwisted} and \ref{prop:I0Llower} with lemma \ref{lem:incrementBound}, we see that
\[
\sum_{J\leq L} \big|\I_0^{(J)} - \I_0^{(J-1)} \big| \ll  |I_0^{(L)}| \sum_{J\leq L} e^{-50\b_\ast^2K_J}.
\]
By definition of $K_J$, the sum on the right hand side is equal to
\[
\sum_{J\leq L} \exp\left(-50\b_\ast^2 \frac{(\log_2 T)^{3/2}}{e^{3J/4}}\right).
\]
Because $T_L \leq T^\d$, it also follows that $L \leq 2 \log_3 T + \log \d$.
Therefore by summing in reverse starting at $J = L$, we see that this sum is bounded by 
\[
\sum_{j\geq 1} \exp\left(-50\b_\ast^2 e^{-3\log\d / 4}  e^{3j/4}\right) \leq \sum_{j\geq 1} \exp\left(-50\b_\ast^2 e^{-3\log\d / 4}    j \right) \ll \exp\left(-50\b_\ast^2 e^{-3\log\d / 4} \right) .
\]
Therefore if we choose $\d > 0$ sufficiently small in terms of $\bm{\b}$ we may ensure that
\[
\sum_{J\leq L} \big|\I_0^{(J)} - \I_0^{(J-1)} \big| \leq \frac{|I_0^{(L)}|}{2}.
\]
Therefore it follows that 
\[
|I_0| \gg   T \prod_{p\leq T_L} \left(1 + \sum_{1\leq j,k \leq m} \frac{\b_j \b_k}{p^{1 + i(\a_j-\a_k)}} \right).
\]

\end{proof}

\subsection{Proof of Proposition \ref{prop:I0Llower}}
To estimate $\I_0^{(L)}$, we will first write $\I_0^{(L)} = \J_1 - \J_2$, where
\[
\J_1 = \int_T^{2T} \prod_{k\leq m} \zeta(\tfrac{1}{2} + i(t+\a_k)) \n_{T_L}(\tfrac{1}{2} + i(t + \a_k); \tfrac{1}{2}(\b_k - 1))^2 \overline{\n_{T_L}(\tfrac{1}{2} + i(t + \a_k); \tfrac{1}{2}\b_k)}^2 w(t/T) \ dt
\]
and 
\[
\J_2 = \int_{[T,2T] \setminus \G_m} \prod_{k\leq m} \zeta(\tfrac{1}{2} + i(t+\a_k)) \n_{T_L}(\tfrac{1}{2} + i(t + \a_k); \tfrac{1}{2}(\b_k - 1))^2 \overline{\n_{T_L}(\tfrac{1}{2} + i(t + \a_k); \tfrac{1}{2}\b_k)}^2 w(t/T) \ dt.
\]
We will evaluate $\J_1$ first. We will make use of the following approximate functional equation.

\begin{lemma}\label{lem:AFEpure} 
Let
\[
V(x,t) = \frac{1}{2\pi i} \int_{(1)} \frac{e^{s^2}}{s} \left(\frac{t^{3m}}{ x}\right)^s ds
\]
and $\tau_{\bm{\a}}(n) = \sum_{n_1\cdots n_m = n} n_1^{-i\a_1} \cdots n_m^{-i\a_m}$.
For $\a_j \leq T/2$ and $t\in [T,2T]$
\[
\prod_{k\leq m} \zeta(\tfrac{1}{2} + i t + i \a_k) = \sum_n \frac{\tau_{\bm{\a}}(n)}{n^{1/2 + i t}} V(n,t) + O(1/T).
\]
\end{lemma}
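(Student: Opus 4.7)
The plan is to consider the auxiliary integral
\[
F(t) := \frac{1}{2\pi i} \int_{(1)} \frac{e^{s^2}}{s} \prod_{k\leq m} \zeta(\tfrac{1}{2} + s + i(t + \a_k)) \cdot (t^{3m})^s \, ds
\]
and evaluate it in two complementary ways. On the line $\Re(s) = 1$, each zeta factor expands absolutely as the Dirichlet series $\sum_{n_k} n_k^{-1/2 - s - i(t+\a_k)}$. Multiplying these $m$ series out, interchanging sum and integral (justified by absolute convergence together with the Gaussian decay of $e^{s^2}$), and collecting terms by $n = n_1 \cdots n_m$ yields
\[
F(t) = \sum_n \frac{\tau_{\bm{\a}}(n)}{n^{1/2 + it}} V(n,t).
\]

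For the second evaluation, I would shift the contour leftwards to $\Re(s) = -c$ for a fixed constant $c > 1/2$ (say $c = 3/4$). In the strip traversed, the integrand has a simple pole at $s = 0$ whose residue is $\prod_{k\leq m} \zeta(\tfrac{1}{2} + i(t+\a_k))$, together with simple poles at $s_k = \tfrac{1}{2} - i(t+\a_k)$ coming from each zeta factor. Since $t \in [T, 2T]$ and $|\a_k| \leq T/2$ force $|t + \a_k| \geq T/2$, the Gaussian at $s_k$ satisfies $|e^{s_k^2}| \leq e^{1/4 - T^2/4}$, making the contributions from all of the $s_k$ entirely negligible.

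It then remains to bound the integral along $\Re(s) = -c$ by $O(1/T)$. The Gaussian $|e^{s^2}| = e^{c^2 - (\Im s)^2}$ restricts the effective range of integration to $|\Im s| \leq \log T$, on which $|t + \a_k + \Im s| \asymp T$. Since $\Re(\tfrac{1}{2} + s) = \tfrac{1}{2} - c < 0$, the functional equation $\zeta(w) = \chi(w)\zeta(1-w)$, together with the standard estimate $|\chi(\sigma + i\tau)| \asymp |\tau|^{1/2 - \sigma}$, gives $|\zeta(\tfrac{1}{2} + s + i(t + \a_k))| \ll T^{c}$, because $\Re(1 - \tfrac{1}{2} - s) = \tfrac{1}{2} + c > 1$ lies in the region of absolute convergence for zeta. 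Combined with $|(t^{3m})^s| = t^{-3mc}$, the full integrand is bounded by $T^{-2mc} e^{-(\Im s)^2}/|s|$, whose integral is $O(T^{-2mc}) = O(1/T)$ since $c > 1/2$ forces $2mc \geq 1$. Equating the two evaluations of $F(t)$ then yields the lemma. The argument is a standard Mellin--Barnes derivation of an approximate functional equation; the only point requiring any care is maintaining uniformity in the shifts $\a_k$, which is clean given $|\a_k| \leq T/2$.
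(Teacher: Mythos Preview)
Your proof is correct and follows essentially the same approach as the paper: define the auxiliary Mellin integral, expand the Dirichlet series on $\Re s=1$ to obtain the sum with $V(n,t)$, then shift the contour leftward, collect the residue at $s=0$, discard the poles at $s_k=\tfrac12-i(t+\a_k)$ via the Gaussian decay, and bound the remaining integral using the functional equation. The paper shifts to $\Re s=-1$ and splits the integral at $|\Im s|=T$ rather than invoking the ``effective range $|\Im s|\leq\log T$'' heuristic, but this is purely cosmetic.
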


\begin{proof}
We will evaluate the integral 
\[
I = \frac{1}{2\pi i} \int_{(1)} \frac{e^{s^2}}{s} t^{3ms} \prod_{k\leq m} \zeta(\tfrac{1}{2} + it + i\a_k + s) ds
\]
in two ways.
Expanding $\prod_{k\leq m} \zeta(\tfrac{1}{2} + it + i\a_k + s)$ into its Dirichlet series and simplifying shows
\[
I =  \sum_n \frac{\tau_{\bm{\a}}(n)}{n^{1/2 + i t}} V(n,t).
\]
Alternatively, by shifting the contour to $\Re s = -1$, we pass over poles at $s = 0$ and $s = -\tfrac{1}{2} - it - i\a_k$.
Only the residue at $s=0$ contributes because of the rapid decay of $e^{s^2}$ and because $|t + \a_k| \geq T/2$.  Therefore
\[
I = \prod_{k\leq m} \zeta(\tfrac{1}{2} + i t + i \a_k) + \frac{1}{2\pi i} \int_{(-1)}\frac{e^{s^2}}{s} t^{3ms} \prod_{k\leq m} \zeta(\tfrac{1}{2} + it + i\a_k + s) ds + O_A(T^{-A}).
\]
To conclude, by the standard estimate $\zeta(-\tfrac{1}{2} + i t) \ll 1 + |t|$, we may bound this final integral by
\[
\ll \int_\R \frac{e^{-y^2}}{y + 1} T^{-3m} \prod_{j\leq s} (1 + |t + \a_j + y|) dy.
\]
The integral over the region $|y| \leq T$ can be bounded by
\[
\ll \int_0^T \frac{e^{-y^2}}{y + 1} T^{-3m} \times T^m \ dy \ll T^{-1},
\]
and the integral over the region $|y| \geq T$ is
\[
\ll \int_{T}^\infty \frac{e^{-y^2}}{y} T^{-2m} \times y^m  \ dy \ll T^{-1}.
\]
\end{proof}
\noindent
Next we will need to understand the coefficients of the Dirichlet polynomials
\[
\prod_{k\leq m} \n_{j,T_L}(\tfrac{1}{2} + i(t + \a_k); \tfrac{1}{2}(\b_k - 1))^2 = \sum_{n} \frac{q_j(n)}{n^{1/2 + it}},
\]
\[
\prod_{k\leq m} \n_{T_L}(\tfrac{1}{2} + i(t + \a_k); \tfrac{1}{2}(\b_k - 1))^2 = \sum_{n} \frac{q(n)}{n^{1/2 + it}},
\]
and
\[
\prod_{k\leq m} \n_{j,T_L}(\tfrac{1}{2} + i(t + \a_k); \tfrac{1}{2}\b_k)^2 = \sum_{n} \frac{r_j(n)}{n^{1/2 + it}},
\]
\[
\prod_{k\leq m} \n_{T_L}(\tfrac{1}{2} + i(t + \a_k); \tfrac{1}{2}\b_k)^2 = \sum_{n} \frac{r(n)}{n^{1/2 + it}}.
\]
Notice that each $q_j(n)$ is the twofold Dirichlet convolution of $b_{j,T_L, \bm{\a}, \tfrac{1}{2}(\bm{\b}-1)}(n)$ with itself and each $r_j(n)$ is the twofold Dirichlet convolution of $b_{j,T_L, \bm{\a}, \tfrac{1}{2}\bm{\b}}(n)$ with itself, where $\tfrac{1}{2}(\bm{\b}-1)$ and $\tfrac{1}{2}\bm{\b}$ denote the vectors with $j^{\text{th}}$  element $\tfrac{1}{2}(\b_j - 1)$ and $\tfrac{1}{2}\b_j$ respectively.
As before, we will define two more sets of coefficients:
Let $q'_j(n)$ be the twofold Dirichlet convolution of $b'_{j,T_L, \bm{\a}, \tfrac{1}{2}(\bm{\b}-1)}(n)$ with itself and let $r'_j(n)$ be the twofold Dirichlet convolution of $b'_{j,T_L, \bm{\a}, \tfrac{1}{2}\bm{\b}}(n)$ with itself.
Finally define $q''_j(n)$ and $r''_j(n)$ in an analogous fashion.
As before, the key point is that $q_j'$ and $r'_j$ are multiplicative approximations of $q_j$ and $r_j$ and that $q''_j$ and $r''_j$ are non-negative multiplicative coefficients satisfying $|q_j|,|q'_j| \leq q''$ and $|r_j|,|r'_j| \leq r''$.
Before proceeding further, we will need the following estimate.

\begin{lemma}\label{lem:prodCoefPatrol}
For $p \in (T_{j-1},T_j]$
\[
q_j(p^l), r_j(p^l) \ll m^2 \b_{\ast}^l l^{4m} e^{-l \log (l/2m)/4m + 2l}.
\]
\end{lemma}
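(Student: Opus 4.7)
\textbf{Proof plan for Lemma \ref{lem:prodCoefPatrol}.} The plan is to bound $q_j(p^l)$ (and $r_j(p^l)$) directly by unravelling the convolution and applying the per prime-power estimates from Lemma \ref{lem:nCoeffPatrol}. I will focus on $q_j$; the argument for $r_j$ is identical after replacing $(\bm{\b}-1)/2$ throughout by $\bm{\b}/2$.

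Since $q_j$ is the twofold Dirichlet convolution of $b := b_{j, T_L, \bm{\a}, (\bm{\b}-1)/2}$ with itself, I would start from
\[
|q_j(p^l)| \leq \sum_{a + b = l} |b(p^a)| \cdot |b(p^b)|.
\]
Because $\b_k \geq 0$ one has $\tfrac{1}{2}|\b_k - 1| \leq \max(1, \b_k)$, so the effective $\b_\ast$-parameter attached to the vector $(\bm{\b}-1)/2$ is still bounded by $\b_\ast$. The larger of the two uniform estimates in Lemma \ref{lem:nCoeffPatrol} (namely the $j=1$, $p \leq \log T$ bound, which by Stirling dominates the simpler $\b_\ast^r m^r / r!$ estimate) then gives, for $r \geq 2$,
\[
|b(p^r)| \leq m \b_\ast^r r^{2m} e^{-r \log (r/m)/(2m) + 2r},
\]
together with $|b(1)| = 1$ and $|b(p)| \leq \b_\ast$.

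For the main terms with $a, b \geq 2$ and $a+b = l$, I would exploit convexity of $x \mapsto x \log(x/m)$ on $(0,\infty)$ (whose second derivative is $1/x > 0$) to obtain
\[
a \log(a/m) + b \log(b/m) \geq l \log(l/(2m)),
\]
combined with the crude estimate $a^{2m} b^{2m} \leq l^{4m}$. Together these yield
\[
|b(p^a)||b(p^b)| \leq m^2 \b_\ast^l l^{4m} \exp\!\left(-\tfrac{l}{2m}\log(l/(2m)) + 2l\right).
\]
Boundary pairs, where one of $a, b$ lies in $\{0,1\}$, are handled separately using $|b(1)| = 1$ and $|b(p)| \leq \b_\ast$, and contribute at most $O(\b_\ast^2 |b(p^l)|)$, absorbed by the main-term bound above.

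The main technical obstacle is absorbing the factor of $O(l)$ from the number of pairs in the convolution, since the target lemma has the weaker exponent $-l\log(l/(2m))/(4m)$ rather than the stronger $-l\log(l/(2m))/(2m)$ I have obtained. To overcome this, I would note that the discrepancy $\exp(-l\log(l/(2m))/(4m))$ decays faster than any polynomial in $l$ once $l \gg m$, while for $l \lesssim m$ both the polynomial $l$ and the exponential in question are $O(1)$ with constants depending only on $m$. This produces the claimed bound $q_j(p^l) \ll m^2 \b_\ast^l l^{4m} e^{-l \log(l/(2m))/(4m) + 2l}$.
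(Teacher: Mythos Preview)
Your proposal is correct and follows essentially the same approach as the paper: write $q_j(p^l)=\sum_{x+y=l} b(p^x)\,b(p^y)$ and feed in the prime-power estimates of Lemma~\ref{lem:nCoeffPatrol}. The only difference is that where you use convexity of $x\mapsto x\log(x/m)$ to obtain the stronger exponent $-l\log(l/(2m))/(2m)$ and then sacrifice half of it to absorb the $O(l)$ count of summands, the paper simply notes that one of $x,y$ is at least $l/2$, which delivers the target exponent $-l\log(l/(2m))/(4m)$ directly from that single factor.
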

\begin{proof}
The bound for $q_j$ follows by combining the formula
\[
q_j(p^l) = \sum_{x + y = l} b_{j,T_L, \bm{\a}, \tfrac{1}{2}(\bm{\b}-1)}(p^x) b_{j,T_L, \bm{\a}, \tfrac{1}{2}(\bm{\b}-1)}(p^y)
\]
with the bound for $b_{j,T_L, \bm{\a}, \tfrac{1}{2}(\bm{\b}-1)}(p^x)$ given in lemma \ref{lem:nCoeffPatrol} and noting that either $x \geq l/2$ or $y \geq l/2$.
The argument for $r_j$ is the same.
\end{proof}

We can now compute
\begin{align*}
\J_1 &=  \frac{1}{2\pi i}  \sum_{n} \sum_{h,k < T^{1/2}} \frac{\tau_{\bm{\a}}(n) q(h)\overline{r(k)}}{n^{1/2} h^{1/2} k^{1/2}} \int_{(1)} \frac{e^{s^2}}{s} \int_{T}^{2T}  \left( \frac{nh}{k} \right)^{-it} \frac{t^{3ms}}{n^s} w(t/T) \ dt \ ds \\
&+ O\left( \frac{1}{T} \int_T^{2T} \prod_{k\leq m} |\n_{T_L}(\tfrac{1}{2} + i(t + \a_k); \tfrac{1}{2}(\b_k - 1)) \n_{T_L}(\tfrac{1}{2} + i(t + \a_k); \tfrac{1}{2}\b_k)|^2  dt \right).
\end{align*}
Because 
\[
\prod_{k\leq m} \n_{T_L}(\tfrac{1}{2} + i(t + \a_k); \tfrac{1}{2}(\b_k - 1)) \n_{T_L}(\tfrac{1}{2} + i(t + \a_k); \tfrac{1}{2}\b_k)
\]
is a short Dirichlet polynomial whose coefficients $c(n)$ satisfy $c(n) \ll_{\ve} n^\ve$, it follows that the error term is $\ll_\ve T^{\ve}$ by lemma \ref{lem:MVDP}.
Returning now to the main term, the next step is to discard the terms where $nh \neq k$.
An exercise in contour integration shows that
\[
t^{j} \frac{\del^j}{\del t^j} V(x,t) \ll_{A,j} \left(1 + |x/t^{3m}|\right)^{-A}. 
\]
Therefore repeated integration by parts implies that 
\[
\int_{T}^{2T} w(t/T) \left(\frac{n h}{k}\right)^{-it} V(x,t) \ dt \ll_{j,A} \frac{(1 + n/T^{3m})^{-A}}{|\log(n h/ k)|^j T^j} .
\]
Because $h$ and $k$ are at most $T^{1/2}$, it follows that $\log(nh/k) \gg T^{-1/2}$ if $nh \neq k$.
Therefore the contribution to the sum of terms with $hn \neq k$ is $O_A(T^{-A})$ because $h,k < T^{1/2}$.

Now by shifting the contour to $\Re s = -1/4$ and simplifying the diagonal terms, we find that 
\[
\J_1 = T\|w\|_1 \sum_{\substack{h,k \leq T^{1/2} \\ h|k}} \frac{\tau_{\bm{\a}}(k/h) q(h) \overline{r(k)}}{k} + O(T^{1- \ve}).
\]
By multiplicativity, we may factor the inner sum as
\[
\sum_{\substack{h,k \leq T^{1/2} \\ h|k}} \frac{\tau_{\bm{\a}}(k/h) q(h) \overline{r(k)}}{k} = \prod_{j\leq L} \sum_{\substack{p| h,k \Rightarrow p\in (T_{j-1},T_j] \\ h|k }} \frac{\tau_{\bm{\a}}(k/h) q_j(h) \overline{r_j(k)}}{k}.
\]
Next notice if $q_j(h) \neq q'_j(h)$, then it must be that $\Omega(h) \geq 100\b_\ast^2 K_j$.
So replacing $q_j$ and $r_j$ with $q'_j$ and $r'_j$ respectively incurs an error of order
\begin{align*}
&e^{-100\b_\ast^2 K_j} \sum_{\substack{p| h,k \Rightarrow p\in (T_{j-1},T_j] \\ h|k }} \frac{e^{\Omega(h)}|\tau_{\bm{\a}}(k/h)| q''_j(h) r''_j(k)}{k} \\
&\ll e^{-100\b_\ast^2 K_j} \prod_{p\in(T_{j-1},T_j]} \left(1 + \frac{2 e \b_\ast^2}{p} + O \left(\frac{1}{p^2}\right)\right) \ll  e^{-50 \b_\ast^2 K_j}.
\end{align*}
for each $j$.
Here we have used the divisor bound $|\tau_{\bm{\a}}(p^y)| \ll_\ve p^{\ve y}$ and lemma \ref{lem:prodCoefPatrol} to bound the contribution of the terms of order smaller than $1/p^2$.
Therefore, suppressing terms of order $T^{1-\ve}$, we see that
\begin{align*}
\J_1 &= T\|w\|_1 \prod_{j\leq L}\left( \prod_{p\in(T_{j-1},T_j]}\left( \sum_{0 \leq x \leq y} \frac{\tau_{\bm{\a}}(p^{y-x}) q_j(p^x) \overline{r_j(p^y)}}{p^y} \right) + O (e^{-50 \b_\ast^2 K_j})\right)\\
&= T\|w\|_1 \prod_{j\leq L}\left( \prod_{p\in(T_{j-1},T_j]}\left( 1 + \frac{q_j(p) \overline{r_j(p)} + \tau_{\bm{\a}}(p) \overline{r_j(p)} }{p} + O \left(\frac{1}{p^2}\right)\right) + O (e^{-50 \b_\ast^2 K_j})\right).
\end{align*}
A quick computation shows that 
\[
q_j(p) + \tau_{\bm_{\a}}(p) = a_{T_L}(p) \sum_{k\leq m} \b_k p^{-i \a_k} = r_j(p).
\]
Therefore, recalling  $a_{T_L}(p) = p^{-1/\log T_L} (1 - \log p/ \log T_L)$, we deduce
\begin{align*}
\J_1 = T\|w\|_1 \prod_{j\leq L}\left( \prod_{p\in(T_{j-1},T_j]}\left( 1 + \frac{1}{p} \Bigg|\sum_{k \leq m} \b_k p^{-i\a_k}\Bigg|^2 + O \left( \frac{\log p}{p \log T_{L}} + \frac{1}{p^2}\right)\right) + O (e^{-50 \b_\ast^2 K_j})\right),
\end{align*}
and we may readily conclude
\begin{prop}\label{prop:J1Lower}
\[
\J_1 \gg  T \prod_{p \leq T_{L}} \left(1 + \sum_{1\leq j , k \leq m} \frac{\b_j \b_k}{p^{1+ i(\a_j - \a_k)}}\right).
\]
\end{prop}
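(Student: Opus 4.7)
The plan is to deduce the bound directly from the expression
\[
\J_1 = T\|w\|_1 \prod_{j\leq L}\!\left( \prod_{p\in(T_{j-1},T_j]}\!\left( 1 + \frac{|\sum_{k\leq m}\b_k p^{-i\a_k}|^2}{p} + E(p)\right) + O(e^{-50 \b_\ast^2 K_j})\right)
\]
just derived, where $E(p) = O(\log p / (p \log T_L) + 1/p^2)$. Expanding $|\sum_k \b_k p^{-i\a_k}|^2 = \sum_{1\leq j,k\leq m}\b_j\b_k p^{-i(\a_j-\a_k)}$ identifies the Euler factor appearing on the right hand side of Proposition \ref{prop:J1Lower}, so the claim reduces to showing (i) the per-prime remainder $E(p)$ distorts the Euler product over $p\leq T_L$ by only a bounded multiplicative factor, and (ii) the level-$j$ additive remainder $O(e^{-50\b_\ast^2 K_j})$ does not spoil the factorisation over $j$.

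For (i) I would pass to logarithms and apply Mertens' theorem:
\[
\sum_{p\leq T_L}\!\left|\log\!\left(1 + \frac{E(p)}{1 + |\sum_k \b_k p^{-i\a_k}|^2/p}\right)\right| \ll \sum_{p\leq T_L}|E(p)| \ll \frac{1}{\log T_L}\sum_{p\leq T_L}\frac{\log p}{p} + \sum_{p}\frac{1}{p^2} \ll 1.
\]
In particular, writing
\[
B_j := \prod_{p\in(T_{j-1},T_j]}\!\left( 1 + \frac{|\sum_k \b_k p^{-i\a_k}|^2}{p} + E(p)\right), \qquad A_j := \prod_{p\in(T_{j-1},T_j]}\!\left( 1 + \frac{|\sum_k \b_k p^{-i\a_k}|^2}{p}\right),
\]
we obtain $|B_j| \gg A_j \geq 1$ uniformly in $j$, using that each Euler factor of $A_j$ is at least $1$.

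For (ii) the bound $|B_j| \gg 1$ lets me pull the additive error out as a multiplicative perturbation, $B_j + O(e^{-50\b_\ast^2 K_j}) = B_j(1 + O(e^{-50\b_\ast^2 K_j}))$. Taking absolute values then gives
\[
|\J_1| \gg T \prod_{j\leq L}|B_j| \cdot \prod_{j\leq L}\bigl(1 - C e^{-50\b_\ast^2 K_j}\bigr) \gg T \prod_{p\leq T_L}\!\left(1 + \frac{|\sum_k \b_k p^{-i\a_k}|^2}{p}\right),
\]
where the final step applies (i) to the first product, and uses the telescoping estimate $\sum_{j\leq L} e^{-50\b_\ast^2 K_j} \ll e^{-50\b_\ast^2\d^{-3/4}}$ already established in the proof of Proposition \ref{prop:I0Bound} to conclude that the second product is $\gg 1$ once $\d$ is sufficiently small in terms of $\bm{\b}$.

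The main obstacle is really just careful bookkeeping: neither source of error is analytically subtle, but one must run both simultaneously through the nested double product without losing positivity of the final lower bound. The key structural feature that makes (i) work is that $E(p)$ carries the global factor $1/\log T_L$ rather than $1/\log T_j$, so that the per-prime errors sum to $O(1)$ overall rather than growing with the number of levels $L$. No new analytic input is needed beyond Mertens' theorem and the already-established summability of $\sum_j e^{-50\b_\ast^2 K_j}$.
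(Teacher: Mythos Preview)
Your argument is correct and is exactly the routine verification the paper leaves implicit when it writes ``and we may readily conclude'' immediately before stating the proposition. The paper supplies no proof beyond the displayed expression for $\J_1$, and your two steps---controlling the per-prime remainder $E(p)$ via Mertens and the global weight $1/\log T_L$, then absorbing the level-$j$ additive error $O(e^{-50\b_\ast^2 K_j})$ multiplicatively using $|B_j|\gg 1$ together with the summability bound from the proof of Proposition~\ref{prop:I0Bound}---are precisely the intended bookkeeping.
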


We will now show, for $\d$ sufficiently small, that $|\J_2| \leq |\J_1|/2$.
To do this we will need to decompose the set $[T,2T] \setminus \G_m$ into many pieces.
First recall the definition
\[
\G := \left\{t\in[T/2,5T/2]: |P_{j,T_L} (\tfrac{1}{2} + i t)| \leq K_j \text{ for all } 1\leq j\leq L\right\}.
\]
Now given a subset $A$ of $[m] := \{1,\ldots, m\}$, we will define 
\[
\G_A :=  \left\{t\in [T,2T]: t + \a_k \in \G \text{ if and only if } k \in A\right\},
\]
and for each $1 \leq j \leq L$ let
\begin{align*}
\B_j := \{t\in [T/2&,5T/2]: |\p_{r,T_s} (\tfrac{1}{2} + i t)| \leq K_j \text{ for all } 1\leq r < j\text{ and } r\leq s \leq  L \\
&\text{ but } |\p_{j,T_s} (\tfrac{1}{2} + i t)|  > K_{j} \text{ for some } j \leq s \leq L \}.
\end{align*}
To evaluate $\J_2$, we must evaluate an integral over all of the $\G_A$ with $A$ ranging over all proper subsets of $[m]$.
Without loss of generality we will write $A = [m]\setminus [a]$.
For each $t \in \G_A$, there is a function $F_t: [a] \rightarrow [L]$ such that $t + \a_j \in \B_{f(j)}$.
We will further partition $\G_A$ into the sets
\[
\B_{A,n} = \{t\in \G_A: \min_{j \in [a]} F_t(j) = n \}.
\]
With this new notation, we may write
\[
[T,2T] \setminus \G_m =  \bigsqcup_{n\leq L} \bigsqcup_{A \subsetneq [m]} \B_{A,n}.
\]

When $n > 1$, we may proceed in a similar manner to \cite{CurranCorrelations}.
Using Lemma \ref{lem:logZetaUpperBound}  with $X = T_{n-1}$  and the definition of $\B_{A,n}$ we find
\begin{align*}
\int_{\B_{A,n}}  &\prod_{k\leq m} |\zeta(\tfrac{1}{2} + i(t+\a_k))| |\n_{T_L}(\tfrac{1}{2} + i(t + \a_k); \tfrac{1}{2}(\b_k - 1))\n_{T_L}(\tfrac{1}{2} + i(t + \a_k); \tfrac{1}{2}\b_k)|^2 w(t/T) \ dt \\
&\ll
\int_{\B_{A,n}} \prod_{k = 1}^m \exp\Bigg(\Re \Bigg( \sum_{j < n}  \p_{j,T_{n-1}} (\tfrac{1}{2}  + i(t + \a_k)) + 1/ c_{n - 1} \Bigg) \\
&\qquad\qquad\qquad\qquad\times |\n_{T_L}(\tfrac{1}{2} + i(t + \a_k); \tfrac{1}{2}(\b_k - 1))\n_{T_L}(\tfrac{1}{2} + i(t + \a_k); \tfrac{1}{2}\b_k)|^2  \ dt \\
&\ll  ~e^{1/ c_{n - 1}}  \int_{\B_{A,n}}\prod_{k = 1}^m \prod_{j < n} \exp\left(\Re \p_{j,T_{n-1}} (\tfrac{1}{2}  + i(t + \a_k)) \right) \\
&\qquad\qquad\qquad\qquad\times |\n_{T_L}(\tfrac{1}{2} + i(t + \a_k); \tfrac{1}{2}(\b_k - 1))\n_{T_L}(\tfrac{1}{2} + i(t + \a_k); \tfrac{1}{2}\b_k)|^2  \ dt \\
&\ll  ~e^{1/ c_{n - 1}} \max_{\substack{\ell \in [a] \\ s \in [L]}}
\int_T^{2T} |\p_{n,T_s}(\tfrac{1}{2} + i (t + \a_\ell))/K_{n}|^{2 \lceil 1/ 20 c_{n}\rceil} \prod_{k = 1}^m \prod_{j < n} |\n_{j,T_{n-1}} (\tfrac{1}{2}+ i(t + \a_k) ; \tfrac{1}{2}) |^2  \\
&\qquad\qquad\qquad\qquad\times |\n_{T_L}(\tfrac{1}{2} + i(t + \a_k); \tfrac{1}{2}(\b_k - 1))\n_{T_L}(\tfrac{1}{2} + i(t + \a_k); \tfrac{1}{2}\b_k)|^2  \ dt.
\end{align*}
So by lemma \ref{lem:Splitting} we need the following mean value calculations.

\begin{prop}\label{prop:RHMVDP}
For $j < n$
\begin{align*}
\int_{T}^{2T} \prod_{k = 1}^m &|\n_{j,T_{n-1}} (\tfrac{1}{2}+ i(t + \a_k) ; \tfrac{1}{2}) \n_{j,T_L}(\tfrac{1}{2} + i(t + \a_k); \tfrac{1}{2}(\b_k - 1))\n_{j,T_L}(\tfrac{1}{2} + i(t + \a_k); \tfrac{1}{2}\b_k)|^2 dt\\
\leq \ & T \prod_{p \in (T_{j-1},T_j]}  \left(1 + \frac{1}{p} \sum_{1\leq j , k \leq m} \frac{\b_j \b_k}{p^{i(\a_j - \a_k)}}  + O  \left(\frac{\log p}{p \log T_{n-1}} + \frac{1}{p^2}\right) \right) + O(e^{-50\b_\ast^2 K_j}).
\end{align*}
\end{prop}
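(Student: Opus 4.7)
The plan is to recognize the integrand as the squared modulus of a short Dirichlet polynomial supported on integers whose prime factors lie in $(T_{j-1},T_j]$, and then compute the mean square via Montgomery--Vaughan. Writing
\[
A(t) := \prod_{k\leq m} \n_{j,T_{n-1}}(\tfrac{1}{2}+i(t+\a_k); \tfrac{1}{2})\, \n_{j,T_L}(\tfrac{1}{2}+i(t+\a_k); \tfrac{1}{2}(\b_k-1))\, \n_{j,T_L}(\tfrac{1}{2}+i(t+\a_k); \tfrac{1}{2}\b_k)
\]
and expanding as $A(t) = \sum_n c(n) n^{-1/2-it}$, Proposition \ref{prop:Njcoeffs} combined with the choice of $K_j$ and $L$ shows the total length is at most $T_j^{300m\b_\ast^2 K_j} \ll T^{1/10}$. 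Lemma \ref{lem:MVDP} therefore gives $\int_T^{2T}|A(t)|^2\,dt = (T+O(T^{1/10}))\sum_n |c(n)|^2/n$, and the Montgomery--Vaughan error is harmlessly absorbed once the main sum is estimated.

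Next, mirroring the reduction carried out in the proof of Proposition \ref{prop:J1Lower}, I would replace the truncated coefficients $c(n)$ by their unconstrained multiplicative analogues $c'(n)$, built from the primed coefficients $b'_{j,X,\bm{\a},\bm{\b}}$ of Lemma \ref{lem:nCoeffPatrol} applied to each of the three $\n$-factors. The discrepancy $c(n)-c'(n)$ vanishes unless $\Omega(n) > 100\b_\ast^2 K_j$, so Rankin's trick combined with $b''_{j,X,\bm{\a},\bm{\b}}(p)\leq \b_\ast$ and the convergence $\sum_{p\in(T_{j-1},T_j]} 1/p = O(1)$ bounds the resulting discrepancy by $O(e^{-50\b_\ast^2 K_j})$.

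With $c'$ multiplicative, $\sum_n |c'(n)|^2/n$ factors as an Euler product over $p\in(T_{j-1},T_j]$. Lemma \ref{lem:nCoeffPatrol} gives
\[
c'(p) = \tfrac{1}{2}\, a_{T_{n-1}}(p) \sum_{k\leq m} p^{-i\a_k} + a_{T_L}(p) \sum_{k\leq m} \bigl(\b_k - \tfrac{1}{2}\bigr) p^{-i\a_k},
\]
and since the $\tfrac{1}{2}$'s cancel algebraically when $a_{T_{n-1}}(p) = a_{T_L}(p) = 1$, the estimate $a_X(p) = 1 + O(\log p/\log X)$ together with $\log T_L \geq \log T_{n-1}$ yields $|c'(p)|^2 = \sum_{1\leq j',k'\leq m} \b_{j'}\b_{k'}\, p^{-i(\a_{j'}-\a_{k'})} + O(\log p/\log T_{n-1})$. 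The higher prime-power contribution $\sum_{r\geq 2} |c'(p^r)|^2/p^r \ll 1/p^2$ follows from the rapid decay of $b''_{j,X,\bm{\a},\bm{\b}}(p^r)$ in Lemma \ref{lem:nCoeffPatrol}. Assembling these pieces into the Euler product gives precisely the displayed bound.

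The only real obstacle is the bookkeeping in the replacement step and in propagating the $a_X(p)$ errors cleanly through the squared modulus; neither introduces ideas beyond those already present in the proof of Proposition \ref{prop:J1Lower}, so I anticipate no new difficulty here.
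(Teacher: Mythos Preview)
Your approach matches the paper's exactly: apply Lemma \ref{lem:MVDP} to the short Dirichlet polynomial, use Rankin's trick to pass from the truncated coefficients to the multiplicative ones $a'(n)$ at cost $O(e^{-50\b_\ast^2 K_j})$, and then evaluate the Euler factor via Lemma \ref{lem:nCoeffPatrol} to obtain $|a'(p)|^2 = \sum_{j',k'} \b_{j'}\b_{k'} p^{-i(\a_{j'}-\a_{k'})} + O(\log p/\log T_{n-1})$ together with $\sum_{r\geq 2}|a'(p^r)|^2/p^r \ll 1/p^2$. The only slip is that for $j=1$ one has $\sum_{p\leq T_1}1/p \asymp \log_2 T$ rather than $O(1)$, but since $K_1\asymp(\log_2 T)^{3/2}$ the Rankin error $e^{-100\b_\ast^2 K_1}\prod_{p\leq T_1}(1+O(1/p))$ is still $\ll e^{-50\b_\ast^2 K_1}$, so your conclusion stands.
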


\begin{proof}
All of the following calculations are similar to earlier computations, so we will just sketch their proofs.
The coefficients of the Dirichlet polynomial 
\[
\prod_{k = 1}^m\n_{j,T_{n-1}} (\tfrac{1}{2}+ i(t + \a_k) ; \tfrac{1}{2}) \n_{j,T_L}(\tfrac{1}{2} + i(t + \a_k); \tfrac{1}{2}(\b_k - 1))\n_{j,T_L}(\tfrac{1}{2} + i(t + \a_k); \tfrac{1}{2}\b_k)
\]
are given by the triple convolution 
\[
a(n) := b_{j,T_{n-1}, \bm{\a}, \bm{1/2}}\ast b_{j,T_{L}, \bm{\a}, \tfrac{1}{2}(\bm{\b} - 1)} \ast b_{j,T_{L}, \bm{\a}, \tfrac{1}{2}\bm{\b}} (n),
\]
where $ \bm{1/2}$ is a vector of $m$ copies of $1/2$. 
Using Rankin's trick, one can replace these with the multiplicative coefficients
\[
a'(n) := b'_{j,T_{n-1}, \bm{\a}, \bm{1/2}}\ast b'_{j,T_{L}, \bm{\a}, \tfrac{1}{2}(\bm{\b} - 1)} \ast b'_{j,T_{L}, \bm{\a}, \tfrac{1}{2}\bm{\b}} (n)
\]
at a cost of $O(e^{-50\b_\ast^2 K_j})$.
Using lemma \ref{lem:nCoeffPatrol}, one may then show that 
\[
|a'(p)|^2 = \sum_{1\leq j , k \leq m} \frac{\b_j \b_k}{p^{i(\a_j - \a_k)}}  + O  \left(\frac{\log p}{\log T_{n-1}}\right) 
\]
and
\[
\sum_{r \geq 2} \frac{|a(p^r)|^2}{p^r} \ll \frac{1}{p^2}.
\]
The claim now follows by multiplicativity and  lemma \ref{lem:MVDP}.
\end{proof}

\begin{prop}
For $1 < n \leq L$
\begin{align*}
\int_{T}^{2T} |\p_{n,T_s}(\tfrac{1}{2} + i (t + \a_\ell))/K_{n}|^{2 \lceil 1/ 20 c_{n}\rceil} & \prod_{k = 1}^m |\n_{n,T_L}(\tfrac{1}{2} + i(t + \a_k); \tfrac{1}{2}(\b_k - 1))\n_{n,T_L}(\tfrac{1}{2} + i(t + \a_k); \tfrac{1}{2}\b_k)|^2 \ dt\\
&\ll T e^{-\log(1/c_n)/40c_n} .
\end{align*}
\end{prop}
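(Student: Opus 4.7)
The plan is to apply Cauchy--Schwarz to decouple the high-power factor $|\p_{n,T_s}|^{2r}$ from the short Dirichlet polynomial $|F|^2$. Writing $r := \lceil 1/(20 c_n) \rceil$ and
\[
F(t) := \prod_{k \leq m} \n_{n,T_L}(\tfrac{1}{2} + i(t + \a_k); \tfrac{1}{2}(\b_k - 1)) \n_{n,T_L}(\tfrac{1}{2} + i(t + \a_k); \tfrac{1}{2}\b_k),
\]
Cauchy--Schwarz immediately bounds the integral in question by
\[
K_n^{-2r}\left(\int_T^{2T} |\p_{n,T_s}(\tfrac{1}{2} + i(t + \a_\ell))|^{4r}\, dt\right)^{1/2}\left(\int_T^{2T} |F(t)|^4 \, dt\right)^{1/2},
\]
reducing the problem to two separate mean-value calculations.

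For the first integral, I would shift the integration variable by $\a_\ell$ (covering the shifted interval by $O(1)$ subintervals of length $T$, which is fine since $|\a_\ell| \leq T/2$) and then apply lemma \ref{lem:MVDPPrimes} with exponent $2r$ in place of $r$. The hypothesis $T_n^{2r} \leq T/\log T$ holds because $2 r c_n \leq 1/5$, while the prime-sum input $\sum_{p \in (T_{n-1}, T_n]} 1/p$ is $O(1)$ by Mertens' theorem together with $c_n/c_{n-1} = e$. This gives $\int_T^{2T} |\p_{n,T_s}|^{4r}\, dt \ll T\, (2r)!\, C^{2r}$ for an absolute constant $C$.

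For the second integral, I would use $|F|^4 = |F^2|^2$ and view $F^2 = \sum_N d(N)/N^{1/2+it}$ as a Dirichlet polynomial of length at most $T^{400 m \b_\ast^2 c_n K_n} = T^{o(1)}$, supported on integers composed of primes in $(T_{n-1}, T_n]$. Lemma \ref{lem:MVDP} reduces the mean square to $T \sum_N |d(N)|^2/N$ up to negligible error. Following the Rankin's-trick approach from the proof of proposition \ref{prop:RHMVDP}, I would replace $d$ by its multiplicative analogue $d'$ at cost $O(e^{-50 \b_\ast^2 K_n})$ and then factor as an Euler product. Lemma \ref{lem:nCoeffPatrol} supplies $|d'(p)| \ll m\b_\ast = O(1)$ with the corresponding control on higher prime powers, so each local factor is $1 + O(1/p)$ and the product is $O(1)$ by Mertens again. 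Hence $\int_T^{2T} |F|^4\, dt \ll T$.

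Combining the two bounds with Stirling's approximation $\sqrt{(2r)!} \ll (2r/e)^r r^{1/4}$ and the identity $K_n^{-2r} = c_n^{3r/2}$, the overall estimate becomes
\[
\ll T \cdot r^{1/4}\cdot\left(\frac{2 C r c_n^{3/2}}{e}\right)^r.
\]
Substituting $r = \lceil 1/(20 c_n) \rceil$ makes the base of the exponential equal to $O(c_n^{1/2})$, and a short logarithmic computation yields the exponent $-\log(1/c_n)/(40 c_n) + O(1/c_n)$. The principal obstacle is bookkeeping rather than anything conceptual: no new ingredients beyond those already deployed in the paper are needed, but one must verify that the $O(1/c_n)$ correction coming from Stirling and from the absolute constant $C$ stays subdominant to $\log(1/c_n)/(40 c_n)$. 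This is automatic here because $n \geq 2$ forces $c_n \leq e^2/(\log_2 T)^2$, so $\log(1/c_n) \to \infty$ as $T$ grows.
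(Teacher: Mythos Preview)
Your proposal is correct and follows essentially the same route as the paper: Cauchy--Schwarz to decouple, then a high-moment bound on the prime sum and a Mertens-type Euler product bound on the short Dirichlet polynomial. The only cosmetic differences are that the paper cites Proposition~4.2 of \cite{CurranCorrelations} for the first factor rather than redoing the computation with lemma~\ref{lem:MVDPPrimes}, and that the paper (somewhat loosely) keeps the exponents $2r$ and $2$ after Cauchy--Schwarz whereas you correctly double them to $4r$ and $4$; either way the conclusion is the same.
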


\begin{proof}
By Cauchy Schwarz, the relevant  mean value is at most
\[
\left(\int_{T}^{2T} |\p_{n,T_s}(\tfrac{1}{2} + i (t + \a_\ell))/K_{n}|^{2 \lceil 1/ 20 c_{n}\rceil} \ dt \right)^{1/2 }
\]
\[
\times \left(\int_{T}^{2T}  \prod_{k = 1}^m |\n_{T_L}(\tfrac{1}{2} + i(t + \a_k); \tfrac{1}{2}(\b_k - 1))\n_{T_L}(\tfrac{1}{2} + i(t + \a_k); \tfrac{1}{2}\b_k)|^2 \ dt \right)^{1/2}.
\]
By Proposition 4.2 of \cite{CurranCorrelations}, the first integral is $\ll  T e^{-\log(1/c_n)/20c_n}$.
Using the same reasoning in the previous proof, one may show that the second integral is 
\[
\ll \prod_{p\in {(T_{n-1},T_n]}} \left(1 + O \left(\frac{1}{p}\right)\right) \ll 1
\]
because $\log T_n/\log T_{n-1} = e$ for $n > 1$. The claim now follows.

\end{proof}

\begin{prop}
For $j > n$
\begin{align*}
\int_{T}^{2T} \prod_{k = 1}^m |\n_{j,T_L}&(\tfrac{1}{2} + i(t + \a_k); \tfrac{1}{2}(\b_k - 1))\n_{j,T_L}(\tfrac{1}{2} + i(t + \a_k); \tfrac{1}{2}\b_k)|^2 dt \\
&\leq T \prod_{p \in (T_{j-1},T_j]}  \left(1 + O \left(\frac{1}{p}\right) \right) + O(e^{-50\b_\ast^2 K_j}).
\end{align*} 
\end{prop}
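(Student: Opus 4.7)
The plan is to mirror the argument used in Proposition \ref{prop:RHMVDP}, noting that the present integrand is strictly simpler because the extra factor $\prod_{k}|\n_{j,T_{n-1}}(\tfrac{1}{2}+i(t+\a_k);\tfrac{1}{2})|^2$ from that proposition is now absent. First I would expand the product inside the integral as a single Dirichlet polynomial supported on integers whose prime factors all lie in $(T_{j-1},T_j]$, with coefficients given by the twofold multiplicative convolution
\[
a(n) := b_{j,T_L,\bm{\a},\tfrac{1}{2}(\bm{\b}-1)}\ast b_{j,T_L,\bm{\a},\tfrac{1}{2}\bm{\b}}(n).
\]
By the choice of $T_j$ and $K_j$, this polynomial has length at most $T_j^{200 m\b_\ast^2 K_j}\leq T^{1/10}$, so the hypotheses of Lemma \ref{lem:MVDP} are met with a negligible $O(T^{1/10})$ error.

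Next I would apply Rankin's trick to replace $a$ with its multiplicative counterpart $a':= b'_{j,T_L,\bm{\a},\tfrac{1}{2}(\bm{\b}-1)}\ast b'_{j,T_L,\bm{\a},\tfrac{1}{2}\bm{\b}}$. By Proposition \ref{prop:Njcoeffs}, $a(n)\neq a'(n)$ forces $\Omega(n)\geq 100\b_\ast^2 K_j$, and inserting the factor $e^{\Omega(n)-100\b_\ast^2 K_j}$ together with the pointwise bound $|a(n)|\leq b''_{j,T_L,\bm{\a},\tfrac{1}{2}(\bm{\b}-1)}\ast b''_{j,T_L,\bm{\a},\tfrac{1}{2}\bm{\b}}(n)$ from Lemma \ref{lem:nCoeffPatrol} yields a convergent Euler product that produces an overall error of size $O(e^{-50\b_\ast^2 K_j})$, exactly as in Proposition \ref{prop:RHMVDP}.

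Combining Lemma \ref{lem:MVDP} with multiplicativity then gives
\[
\int_T^{2T}\Big|\sum_n \frac{a'(n)}{n^{1/2+it}}\Big|^2 dt \leq T \prod_{p\in(T_{j-1},T_j]}\left(1 + \sum_{r\geq 1}\frac{|a'(p^r)|^2}{p^r}\right) + O(e^{-50\b_\ast^2 K_j}).
\]
By Lemma \ref{lem:nCoeffPatrol}, $|b'_{j,T_L,\bm{\a},\cdot}(p)|\leq \b_\ast$, so $|a'(p)|\ll \b_\ast^2 = O(1)$ and $|a'(p)|^2/p = O(1/p)$. The same lemma controls higher prime powers by $|a'(p^r)|\ll (C\b_\ast m)^r/r!$ for $r\geq 2$, hence $\sum_{r\geq 2}|a'(p^r)|^2/p^r = O(1/p^2)$. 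The two contributions combine to give the local factor $1+O(1/p)$, yielding the stated bound.

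This proposition is the most routine of the mean value calculations in the paper: no Riemann hypothesis is invoked, no delicate cancellation between coefficients is needed, and no divisor function twist is present. The only care required is in checking that the Dirichlet polynomial has length at most $T^{1/10}$ so that Lemma \ref{lem:MVDP} is clean, and in verifying that the Rankin's trick error is genuinely absorbed into the $O(e^{-50\b_\ast^2 K_j})$ term — both of which proceed identically to the corresponding steps in Proposition \ref{prop:RHMVDP}.
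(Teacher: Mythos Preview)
Your proposal is correct and matches the paper's own proof, which is given as a one-sentence sketch: ``a simpler version of the proof of Proposition \ref{prop:RHMVDP}'' with ``a double convolution instead of a triple convolution'' and ``a cruder bound for the Dirichlet polynomial coefficients.'' You have spelled out exactly these steps. One trivial slip: at a prime $p$ the convolution gives $|a'(p)|\leq 2\b_\ast$, not $\b_\ast^2$, but this does not affect the $O(1/p)$ conclusion.
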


\begin{proof}
The proof is a simpler version of the proof of Proposition \ref{prop:RHMVDP}. 
One now has a double convolution instead of a triple convolution, and  uses a cruder bound for the Dirichlet polynomial coefficients. The details are omitted.
\end{proof}

The previous three propositions and Mertens' estimate now imply that
\begin{align*}
&\int_{\B_{A,n}}  \prod_{k\leq m} |\zeta(\tfrac{1}{2} + i(t+\a_k))| |\n_{T_L}(\tfrac{1}{2} + i(t + \a_k); \tfrac{1}{2}(\b_k - 1))\n_{T_L}(\tfrac{1}{2} + i(t + \a_k); \tfrac{1}{2}\b_k)|^2 w(t/T) \ dt \\
&\ll T \prod_{j < n} \left(\prod_{p \in (T_{j-1},T_j]}  \left(1 + \frac{1}{p} \sum_{1\leq j , k \leq m} \frac{\b_j \b_k}{p^{i(\a_j - \a_k)}}  + O  \left(\frac{\log p}{p \log T_{n-1}} + \frac{1}{p^2}\right) \right) + O(e^{-50\b_\ast^2 K_j}) \right)\\
\end{align*}
\begin{align*}
&\qquad\qquad\times e^{1/c_{n-1} - \log(1/c_n)/40c_n} \prod_{n < j \leq L} \left(\prod_{p \in (T_{j-1},T_j]}  \left(1 + O \left(\frac{1}{p}\right) \right) + O(e^{-50\b_\ast^2 K_j})\right) \\
&\ll T \exp \left(1/c_{n-1} - \log(1/c_n)/40c_n + O\left((\log_2 T)^2 e^{-n} \right) \right) \prod_{p\leq T_{n-1}} \left(1 + \frac{1}{p} \sum_{1\leq j , k \leq m} \frac{\b_j \b_k}{p^{i(\a_j - \a_k)}}\right)\\
&\ll T \exp \left(1/c_{n-1} - \log(1/c_n)/40c_n + O\left((\log_2 T)^2 e^{-n} + L - n\right) \right) \prod_{p\leq T_L} \left(1 + \frac{1}{p} \sum_{1\leq j , k \leq m} \frac{\b_j \b_k}{p^{i(\a_j - \a_k)}}\right).
\end{align*}
\noindent
When $n = 1$, we will instead use the estimate
\begin{prop}\label{prop:badSetInt}
Assuming the Riemann hypothesis, for any $A \subsetneq [m]$
\begin{align*}
\int_{\B_{A,1}} \prod_{k\leq m} \zeta(\tfrac{1}{2} + i(t+\a_k)) \n_{T_L}(\tfrac{1}{2} + i(t &+ \a_k); \tfrac{1}{2}(\b_k - 1))^2 \overline{\n_{T_L}(\tfrac{1}{2} + i(t + \a_k); \tfrac{1}{2}\b_k)}^2 w(t/T) \ dt \\
&\ll_{A} T(\log T)^{-A}.
\end{align*}
\end{prop}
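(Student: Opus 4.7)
The plan is to apply Cauchy--Schwarz, separating the small Lebesgue measure of $\B_{A,1}$ from an $L^2$ bound on the integrand over the full interval $[T,2T]$. Letting $F(t)$ denote the integrand in question, Cauchy--Schwarz yields
\[
\left|\int_{\B_{A,1}} F(t)\,dt\right| \leq |\B_{A,1}|^{1/2}\left(\int_T^{2T} |F(t)|^2\,dt\right)^{1/2}.
\]

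For the measure bound, I would note that by definition each $t\in\B_{A,1}$ lies in the translate of $\B_1$ by $-\a_\ell$ for some $\ell\in[a]$: there exist $\ell\in[a]$ and $s\in[L]$ with $|\p_{1,T_s}(\tfrac{1}{2}+i(t+\a_\ell))|>K_1$. Markov's inequality on the $2r$-th moment, combined with Lemma~\ref{lem:MVDPPrimes} (valid for $r\leq(\log_2 T)^2/(2e)$, which ensures $T_1^r\leq T/\log T$), bounds each such subset by $\ll T r!(\log_2 T)^r K_1^{-2r}$. Recalling $K_1\sim(\log_2 T)^{3/2}$ and applying Stirling's formula with $r=\lfloor(\log_2 T)^2/(4\sqrt{e})\rfloor$ yields
\[
|\B_{A,1}|\ll T\exp\bigl(-c(\log_2 T)^2\bigr)
\]
for some $c=c(\bm{\b})>0$; this decays faster than any negative power of $\log T$.

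To bound $\int_T^{2T}|F(t)|^2\,dt$, apply Cauchy--Schwarz once more to separate the $\zeta$-factors from the $\n_{T_L}$-factors. The resulting $\zeta$-piece is the pure shifted moment $\int\prod_{k\leq m}|\zeta(\tfrac{1}{2}+i(t+\a_k))|^4\,dt=M_{\bm{\a},(2,\ldots,2)}(T)$, which under the Riemann hypothesis is controlled by the upper bound of \cite{CurranCorrelations} and is therefore $\ll T(\log T)^{O(1)}$ (the shift factors $|\zeta(1+i(\a_j-\a_k)+1/\log T)|$ are each $\ll\log T$). The $\n_{T_L}$-piece is a mean value of a Dirichlet polynomial of length $\leq T^{1/10}$, which can be evaluated via Lemmas~\ref{lem:MVDP} and~\ref{lem:Splitting} together with Lemma~\ref{lem:nCoeffPatrol} in the same manner as in Proposition~\ref{prop:JBound}, yielding $\ll T(\log T)^{O(1)}$. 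Hence $\int_T^{2T}|F|^2\,dt\ll T(\log T)^C$ for some $C=C(\bm{\b},m)$.

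Combining these estimates gives $\left|\int_{\B_{A,1}}F\,dt\right|\ll T(\log T)^{C/2}\exp(-c(\log_2 T)^2/2)$, which is $\ll_A T(\log T)^{-A}$ for any fixed $A>0$ once $T$ is large enough in terms of $A$, $\bm{\b}$, and $m$. The main obstacle is establishing the $L^2$-bound on $F$, which forces invocation of the upper bound theorem for shifted moments from \cite{CurranCorrelations} (itself conditional on RH); once this is available, the super-polynomial smallness of $|\B_{A,1}|$ easily absorbs all polylogarithmic losses.
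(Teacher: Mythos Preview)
Your argument is correct and follows essentially the same route as the paper: separate the measure of $\B_{A,1}$, the $\zeta$-factors, and the $\n_{T_L}$-factors by H\"older, then invoke the large-deviation bound for the measure, the shifted-moment upper bound of \cite{CurranCorrelations} for the $\zeta$-piece, and a mean-value computation for the $\n$-piece. The only cosmetic differences are that the paper uses a single H\"older with exponents $(3,3,3)$ (giving $\int\prod|\zeta|^3$ and $\int\prod|\n_{T_L}(\cdot;\tfrac{1}{2}(\b_k-1))\n_{T_L}(\cdot;\tfrac{1}{2}\b_k)|^6$) rather than your iterated Cauchy--Schwarz (effectively $(2,4,4)$), and that after your second Cauchy--Schwarz the $\n$-polynomial has length $\leq T^{8/10}$ rather than $T^{1/10}$---still short enough for Lemma~\ref{lem:MVDP} to apply, so this is immaterial.
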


\begin{proof}
By Hölder's inequality, the integral over $\B_{A,1}$ is at most
\[
(\text{meas } \B_{A,1})^{1/3} \left(\int_T^{2T} \prod_{k\leq m} |\zeta(\tfrac{1}{2} + i(t+\a_k))|^3 \ dt\right)^{1/3}
\]
\[
\times \left(\int_T^{2T} |\n_{T_L}(\tfrac{1}{2} + i(t + \a_k); \tfrac{1}{2}(\b_k - 1)) \n_{T_L}(\tfrac{1}{2} + i(t + \a_k); \tfrac{1}{2}\b_k) |^6   \ dt \right)^{1/3}
\]
By the same reasoning used in the proof of Proposition \ref{prop:RHMVDP}, one may show the final integral is $\ll T (\log T)^{O(1)}$.
By the main theorem in \cite{CurranCorrelations}, the first integral is also  $\ll T (\log T)^{O(1)}$.
The claim now follows by using lemma 2.7 of \cite{CurranCorrelations} to bound $\text{meas } \B_{A,1}.$
\end{proof}

We now have all the necessary tools to show
\begin{prop}\label{prop:J2J1comp}
Assuming the Riemann hypothesis, if $\d$ is sufficiently small in terms of $\bm{\b}$
\[
|\J_2| \leq |\J_1|/2.
\]
\end{prop}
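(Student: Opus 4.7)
The plan is to bound $|\J_2|$ using the partition $[T,2T]\setminus \G_m = \bigsqcup_{A\subsetneq [m]} \bigsqcup_{n \leq L} \B_{A,n}$, so that $|\J_2| \leq \sum_{A, n} \bigl|\int_{\B_{A,n}} F(t)\, dt\bigr|$, where $F$ is the integrand of $\J_2$. Since there are only $2^m - 1 = O(1)$ proper subsets $A \subsetneq [m]$, it suffices to bound each integral individually and sum over $n$. I will handle the $n \geq 2$ terms using the explicit bound derived just above the statement together with Proposition \ref{prop:J1Lower}, and the single $n = 1$ term using Proposition \ref{prop:badSetInt}.

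For $n \geq 2$, the displayed bound reads $\int_{\B_{A,n}} |F|\,dt \ll T e^{E_n} \prod_{p \leq T_L}(1 + \sum_{j,k} \b_j\b_k / p^{1+i(\a_j-\a_k)})$ with exponent $E_n := 1/c_{n-1} - \log(1/c_n)/(40 c_n) + O((\log_2 T)^2 e^{-n} + L-n)$, and by Proposition \ref{prop:J1Lower} this is $\ll e^{E_n}|\J_1|$. Substituting $c_j = e^j/(\log_2 T)^2$, the first three contributions to $E_n$ combine to $(\log_2 T)^2 e^{-n}\bigl(e + 1 - (2\log_3 T - n)/40\bigr)$, which is large and negative as soon as $2\log_3 T - n$ exceeds an absolute constant. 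Since $L \leq 2\log_3 T + \log \d$, this condition holds uniformly in $n \leq L$ once $-\log \d$ is large enough; the hypothesis $\d < e^{-1000\b_\ast}$ more than suffices. The worst case is $n = L$, where $e^L \asymp \d(\log_2 T)^2$ yields $e^{E_L} \ll (\log_2 T)^{-c(\bm{\b})/\d}$ for some $c(\bm{\b}) > 0$; the sum $\sum_{n=2}^L e^{E_n}$ is geometric-type, dominated by this worst term, hence $o(1)$ as $T \to \infty$.

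For $n = 1$, Proposition \ref{prop:badSetInt} directly gives $\int_{\B_{A,1}} |F|\, dt \ll_A T(\log T)^{-A}$ for any fixed $A > 0$. Meanwhile, Proposition \ref{prop:J1Lower} combined with Mertens' theorem and the elementary lower bound $|\zeta(1 + 1/\log T + i\theta)| \gg (\log T)^{-1}$ for $|\theta| \leq T$ gives $|\J_1| \gg T(\log T)^{-C_0(\bm{\b})}$ for some $C_0(\bm{\b}) > 0$. Taking $A > C_0(\bm{\b}) + 1$ makes the $n = 1$ contribution $o(|\J_1|)$.

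Summing both cases across the $O(1)$ choices of $A$, we conclude that $|\J_2| = o(|\J_1|)$ as $T \to \infty$, and in particular $|\J_2| \leq |\J_1|/2$ once $T$ is sufficiently large and $\d$ sufficiently small in terms of $\bm{\b}$. The main obstacle is delicately tracking the sign of $E_n$: the positive contribution $1/c_{n-1}$ coming from invoking Lemma \ref{lem:logZetaUpperBound} at $X = T_{n-1}$ has the same order as the additive error $(\log_2 T)^2 e^{-n}$ produced in the mean-value estimates, and only the next-iterate factor $-\log(1/c_n)/(40 c_n)$ carries the crucial extra $\log(1/c_n) \asymp \log_3 T$ that tips the balance. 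This is precisely the telescoping savings that the geometric choice $c_j = e^j/(\log_2 T)^2$ is engineered to supply, and it explains why $\d$ must be small in terms of $\bm{\b}$ so that the net savings at $n = L$ still diverges to $-\infty$.
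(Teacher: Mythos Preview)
Your overall plan matches the paper's proof: partition into the $\B_{A,n}$, combine the displayed estimate preceding the proposition with Proposition~\ref{prop:J1Lower} for $n\geq 2$, use Proposition~\ref{prop:badSetInt} for $n=1$, and sum. The $n=1$ step is fine; in fact each factor in the product of Proposition~\ref{prop:J1Lower} is $\geq 1$ because $\sum_{j,k}\b_j\b_k p^{-i(\a_j-\a_k)}=\bigl|\sum_k\b_k p^{-i\a_k}\bigr|^2\geq 0$, so already $|\J_1|\gg T$ without appealing to lower bounds for $|\zeta(1+1/\log T+i\theta)|$.

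However, your quantitative treatment of the $n\geq 2$ sum contains an error. At $n=L$ one has $(\log_2 T)^2 e^{-L}=1/c_L\asymp 1/\d$ and $2\log_3 T-L\asymp \log(1/\d)$, both \emph{constants} in $T$; hence $E_L\asymp (1/\d)\bigl(O(1)+(\log\d)/40\bigr)$ is bounded in $T$, not of order $-(c/\d)\log_3 T$ as your claim $e^{E_L}\ll(\log_2 T)^{-c/\d}$ would require. Consequently $\sum_{n\geq 2}e^{E_n}$ is not $o(1)$ as $T\to\infty$; it is merely bounded by a constant depending on $\d$ and $\bm\b$. The paper makes this precise by reindexing $j=L-n$, using $(\log_2 T)^2e^{-n}=e^{j}/c_L\geq e^{j}$ together with the negativity of $O(1)+(\log\d)/40$, and then $e^j\geq j$ to reduce to a geometric series summing to $\ll\exp\bigl(O(1)+(\log\d)/40\bigr)$. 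The inequality $|\J_2|\leq|\J_1|/2$ therefore genuinely relies on choosing $\d$ small in terms of $\bm\b$, not on sending $T\to\infty$ as your write-up asserts; once this is corrected your argument goes through.
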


\begin{proof}
The preceding calculations in tandem with Proposition \ref{prop:J1Lower} imply that for $n > 1$
\begin{align*}
\int_{\B_{A,n}}  \prod_{k\leq m} |\zeta(\tfrac{1}{2} &+ i(t+\a_k))| |\n_{T_L}(\tfrac{1}{2} + i(t + \a_k); \tfrac{1}{2}(\b_k - 1))\n_{T_L}(\tfrac{1}{2} + i(t + \a_k); \tfrac{1}{2}\b_k)|^2 w(t/T) \ dt\\
&\ll \exp\left(1/c_{n-1} - \log(1/c_n)/40c_n + O\left((\log_2 T)^2 e^{-n} + L - n\right)\right) |\J_1|.
\end{align*}
Therefore by summing over all $A \subsetneq [m]$ and applying Proposition \ref{prop:badSetInt} we see that
\begin{equation}\label{eq:J12comp}
|\J_2| \ll |\J_1| \sum_{2\leq n \leq L}\exp\left(1/c_{n-1} - \log(1/c_n)/40c_n + O\left((\log_2 T)^2 e^{-n} + L - n\right)\right) + O_A\left(T(\log T)^{-A}\right).
\end{equation}
The $T(\log T)^{-A}$ term is  negligible. 
To bound the sum over $n$, we use the definition of the $c_n$ to find
\begin{align*}
\sum_{2\leq n \leq L}\exp\left(1/c_{n-1} - \log(1/c_n)/40c_n + O\left((\log_2 T)^2 e^{-n} + L - n\right)\right)\\
= \sum_{2\leq n \leq L}\exp\left(e^{-n} (\log_2 T)^2 (O(1) + \tfrac{1}{40} n - \tfrac{1}{20} \log_3 T)\right) + O(L - n) ).
\end{align*}
Because $T_L \leq T^{\d}$ it follows that $L \leq 2 \log_3 T + \log \d$ so the sum is at most
\begin{align*}
\sum_{2\leq n \leq L} \exp\left(e^{-n} (\log_2 T)^2 (O(1) + \tfrac{1}{40}\log \d)\right) + O(L - n) ).
\end{align*}
By summing in reverse, we may bound this sum by
\[
\sum_{j \geq 0} \exp\left(e^j (O(1) + \tfrac{1}{40}\log \d) + O(j) \right)\leq  \sum_{j \geq 1} \exp\left((O(1) + \tfrac{1}{40}\log \d ) j \right) \leq \exp\left(O(1) + \tfrac{1}{40}\log \d  \right).
\]
Therefore by taking $\d$ sufficiently small in terms of $\bm{\b}$, we can ensure that this sum times the implicit constant in (\ref{eq:J12comp}) is at most $1/3$, so the claim follows.
\end{proof}
\noindent
Combining this with Proposition \ref{prop:J1Lower} completes the proof of Proposition \ref{prop:I0Llower} \qed

\subsection{Proof of Proposition \ref{prop:RHtwisted}}

Now only Proposition \ref{prop:RHtwisted} is needed to complete the proof of Proposition \ref{prop:I0Bound}. 
To accomplish this, we will now use Lemma \ref{lem:logZetaUpperBound} with $X = T_{L}$  and the definition of $\G_m$ to deduce

\begin{align*}
\int_{\G_m}  &\prod_{k\leq m} |\zeta(\tfrac{1}{2} + i(t+\a_k))| |\n_{T_L}(\tfrac{1}{2} + i(t + \a_k); \tfrac{1}{2}(\b_k - 1))\n_{T_L}(\tfrac{1}{2} + i(t + \a_k); \tfrac{1}{2}\b_k)|^2 w(t/T) \ dt \\
&\ll    \int_{\G_m}\prod_{k = 1}^m \prod_{j \leq L} \exp\left(\Re \p_{j,T_{L}} (\tfrac{1}{2}  + i(t + \a_k)) \right) \\
&\qquad\qquad\qquad\times |\n_{T_L}(\tfrac{1}{2} + i(t + \a_k); \tfrac{1}{2}(\b_k - 1))\n_{T_L}(\tfrac{1}{2} + i(t + \a_k); \tfrac{1}{2}\b_k)|^2  \ dt \\
&\ll 
\int_T^{2T} \prod_{k = 1}^m  |\n_{T_{L}} (\tfrac{1}{2}+ i(t + \a_k) ; \tfrac{1}{2}) |^2  \\
&\qquad\qquad\qquad\times |\n_{T_L}(\tfrac{1}{2} + i(t + \a_k); \tfrac{1}{2}(\b_k - 1))\n_{T_L}(\tfrac{1}{2} + i(t + \a_k); \tfrac{1}{2}\b_k)|^2  \ dt.
\end{align*}
So all that remains to is compute the mean square of the Dirichlet polynomial 
\[
\prod_{k = 1}^m \n_{T_{L}} (\tfrac{1}{2}+ i(t + \a_k) ; \tfrac{1}{2}) \n_{T_L}(\tfrac{1}{2} + i(t + \a_k); \tfrac{1}{2}(\b_k - 1))\n_{T_L}(\tfrac{1}{2} + i(t + \a_k); \tfrac{1}{2}\b_k).
\]
Proposition \ref{prop:RHMVDP} with $n = L + 1$ and lemma \ref{lem:Splitting} imply that this mean value is 
\[
\ll T \prod_{p \leq T_L} \left(1 + \frac{1}{p} \sum_{1\leq j,k \leq m} \frac{\b_j \b_k}{p^{i(\a_j - \a_k)}}\right).
\]
This now concludes the proof of Proposition \ref{prop:RHtwisted}, and therefore also the proof of Proposition \ref{prop:I0Bound}.

\qed

\section{Proof of Proposition \ref{prop:IkBound}}
All that remains is to bound the quantities
\begin{align*}
\I_k = \int_{\G_m} |\zeta(\tfrac{1}{2} + i (t + \a_k))|^{2m} \prod_{j\leq L} &\exp\left(2(\b_k-m)\Re \p_{j,T_L}(\tfrac{1}{2}+i(t+\a_k)) \right) \\
&\times \prod_{\substack{\ell \leq m \\ \ell \neq k}} \prod_{j\leq L} \exp\left(2\b_\ell\Re \p_{j,T_L}(\tfrac{1}{2}+i(t+\a_\ell)) \right) w(t/T) dt.
\end{align*}
This is very similar to the proof of Proposition \ref{prop:RHtwisted}.
By applying lemma \ref{lem:logZetaUpperBound} with $X = T_L$ and lemma \ref{lem:expTaylorSeries} it follows that
\begin{align*}
\I_k \ll \int_{T}^{2T} |\n_{T_L}(\tfrac{1}{2} &+ i(t + \a_k);m) \n_{T_L}(\tfrac{1}{2} + i(t + \a_k);\b_k - m)|^2 \\
&\times \prod_{\substack{\ell \leq m \\ \ell \neq k}}  |\n_{T_L}(\tfrac{1}{2} + i(t + \a_\ell);\b_\ell)|^2  \ dt.
\end{align*}
Before proceeding it may be helpful to review the definitions made preceding Proposition \ref{prop:Njcoeffs}.
When $j > 1$, the coefficients $a_j(n)$ of the Dirichlet polynomial 
\[
\n_{j,T_L}(\tfrac{1}{2} + i(t + \a_k);m) \n_{j,T_L}(\tfrac{1}{2} + i(t + \a_k);\b_k - m) \prod_{\substack{\ell \leq m \\ \ell \neq k}}  \n_{j,T_L}(\tfrac{1}{2} + i(t + \a_\ell);\b_\ell)
\]
are given by the $m + 1$ fold Dirichlet convolution of $g_{T_L}(n; m) n^{-i \a_k} c_j(n)$ and $g_{T_L}(n; \b_k - m) n^{-i \a_k} c_j(n)$ with $g_{T_L}(n; \b_\ell) n^{-i \a_\ell} c_j(n)$ for all $\ell \leq m$  not equal to $k$.
When $j = 1$ a similar formula holds with $f_{T_L}$ in place of $g_{T_L}$.
As before, one may replace $c_j(n)$ with $1_{p|n \Rightarrow p\in (T_{j-1},T_j]}$ at a cost of $O(e^{-50\b_\ast^2K_j})$ to obtain multiplicative coefficients $a_j'(n)$ which
\[
|a'_j(p)|^2 =  \sum_{1\leq j,k \leq m} \frac{\b_j \b_k}{p^{i(\a_j - \a_k)}} + O\left(\frac{\log p}{\log T_L}\right)
\]
and 
\[
\sum_{r \geq 2} \frac{|a('p^r)|^2}{p^r} \ll \frac{1}{p}
\]
for $T_{j-1} < p \leq T_j$.
As before, this allows us to conclude that
\[
\I_k \ll T \prod_{p\leq T_L} \left(1 + \sum_{1\leq j,k \leq m} \frac{\b_j \b_k}{p^{i(\a_j - \a_k)}} \frac{1}{p}\right).
\]
Proposition \ref{prop:IkBound}, and therefore Theorem \ref{thm:main}, readily follows.


\begin{thebibliography}{99}


\bibitem{Atkinson}
F. V. Atkinson. \textit{The mean value of the Riemann zeta-function}, Acta Math. 81, 353–376 (1949).


\bibitem{BCHB}
R. Balasubramanian, J. B. Conrey, D. R. Heath-Brown, \textit{Asymptotic mean square of the product of the
Riemann zeta-function and a Dirichlet polynomial}, J. Reine Angew. Math. 357, 161–181 (1985).

\bibitem{BettinUS} S. Bettin, \textit{The second moment of the Riemann zeta function with unbounded shifts.} Int. J. Number Theory 6,  no. 8, 1933–1944 (2010). 


\bibitem{Bourgade}
P. Bourgade,\textit{ Mesoscopic fluctuations of the zeta zeros,} Probab. Theory Related Fields {148},
no. 3-4, 479–500 (2010).



\bibitem{Chandee}
V. Chandee. \textit{On the correlation of shifted values of the Riemann zeta function}. Q. J. Math. 62, no. 3, 545-572 (2011).
 

\bibitem{CFKRSRMT}
J. B. Conrey, D. W. Farmer, J. P.  Keating, M. O. Rubinstein, and N. C. Snaith, \textit{Autocorrelation of random matrix polynomials}, Commun. Math. Phys  {237} 3, 365-395 (2003).

\bibitem{CFKRS}
J. B. Conrey,  D. W. Farmer,  J. P. Keating, M. O. Rubinstein, and  N. C. Snaith,\textit{ Integral moments of $L$-functions}, Proc. London Math. Soc.  {91} no. 3, 33–104 (2005).

\bibitem{CurranCorrelations}
M. J. Curran, \textit{Correlations of the Riemann zeta function}, arXiv preprint arXiv:2303.10123 (2023).




\bibitem{CurranMoM}
M. J. Curran, \textit{Freezing transition and moments of moments of the Riemann zeta function}, arXiv preprint arXiv:2301.10634 (2023).


\bibitem{GS}
A. Granville and K. Soundararajan,\textit{ Multiplicative number theory: The pretentious approach.} To appear.



\bibitem{HL}
G. H. Hardy and J. E. Littlewood, \textit{Contributions to the theory of the Riemann zeta-function and the theory of the distribution of primes.} Acta Mathematica 41, 119 - 196 (1918).


\bibitem{Harper}
A. J. Harper, \textit{Sharp conditional bounds for moments of the Riemann zeta function}, arXiv preprint  arXiv.1305.4618 (2013).


\bibitem{HRS}
W. Heap., M. Radziwiłł, K. Soundararajan, \textit{Sharp upper bounds for fractional moments of the Riemann zeta function}, Q. J. Math.  {70}, no. 4, 1387–1396 (2019).

\bibitem{HS}
W. Heap., K. Soundararajan, \textit{Lower bounds for moments of zeta and L-functions revisited}, Mathematika {1}, no. 68, 1-14 (2022).

\bibitem{InghamMV}
A. E. Ingham, \textit{Mean-value theorems in the theory of the Riemann zeta function}, Proc. London Math. Soc.  {27}, 273–300 (1926).

\bibitem{IK}
H. Iwaniec and E. Kowalski. \textit{Analytic number theory},   American Mathematical Society, vol. 53, Providence, RI (2004).


\bibitem{KS}
J. P. Keating, N. C. Snaith, \textit{Random matrix theory and $\zeta(\tfrac{1}{2} + it)$}, Comm. Math. Phys.  {214}, 57–89 (2000).


\bibitem{Koukoulopoulos}
D. Koukoulopoulos, \textit{ Pretentious multiplicative functions and the prime number theorem for arithmetic
progressions}, Compos. Math. 149, no. 7, 1129–1149 (2013).

\bibitem{Kovaleva}
V. Kovaleva. \textit{Correlations of the squares of the Riemann zeta on the critical line.} arXiv preprint arXiv:2401.13725 (2024).

\bibitem{Kosters}
H. Kösters.\textit{ On the occurrence of the sine kernel in connection with the shifted moments of the Riemann zeta function}, J. Number Theory 130, 2596-2609 (2010).

\bibitem{Motohashi4th}
Y. Motohashi. \textit{An explicit formula for the fourth power mean of the Riemann zeta-function}, Acta Math. 170, 181–220 (1993).

\bibitem{MotohashiSpec}
Y. Motohashi.\textit{ Spectral theory of the Riemann zeta-function}, Cambridge Tracts in Math. 127, Cambridge Univ. Press, Cambridge (1997).


\bibitem{NSW}
N. Ng, Q. Shen, P.-J. Wong. \textit{Shifted moments of the Riemann zeta function.} arXiv preprint arXiv:2206.03350 (2022).



\bibitem{LDSCLT}
M. Radziwiłł, \textit{Large deviations in Selberg’s central limit theorem}, arXiv preprint  arXiv.1108.5092 (2011).

\bibitem{RS-LB}
M. Radziwiłł, K. Soundararajan, \textit{Continuous lower bounds for moments of zeta and L-functions}, Mathematika, 59 no. 1 (2013): 119–128.

\bibitem{RS-EC}
M. Radziwiłł, K. Soundararajan, \textit{Moments and distribution of central $L$-values of quadratic
twists of elliptic curves}, Invent. Math.  {202} no. 3, 1029–1068 (2015).


\bibitem{SRHMoments}
K. Soundararajan, \textit{Moments of the Riemann zeta function}, Annals of Math.  {170} no. 2, 981–993 (2009).

\bibitem{Titchmarsh}
E. C. Titchmarsh, \textit{The Theory of the Riemann Zeta-Function}, 2nd ed., Oxford University Press, New York, (1986). 

\end{thebibliography}
\end{document}